\newtheorem{theorem}{Theorem}
\newtheorem{lemma}{Lemma}
\newtheorem{proposition}{Proposition}
\newtheorem{example}{Example}
\newcommand{\vecop}{\text{vec}}
\newcommand{\grad}{\operatorname{grad}}
\newcommand{\W}{\text{Wasserstein}\xspace}
\providecommand{\keywords}[1]{\textbf{\textit{Keywords---}} #1}
\title{Accelerated Natural Gradient Method for Parametric Manifold Optimization%
  \thanks{The first two authors contributed equally to this work. This work was supported in part by the National Natural Science Foundation of China under the grant numbers 12331010 and 12288101, and National Key Research and Development Program of China under the grant number 2024YFA1012903.}}
\author{%
  Chenyi Li\thanks{School of Mathematical Sciences, Peking University, China
    ({lichenyi@stu.pku.edu.cn}).}%
  \and
  Shuchen Zhu\thanks{Center for Data Science, Peking University, China
    ({shuchenzhu@stu.pku.edu.cn}).}%
  \and
  Zhonglin Xie\thanks{Beijing International Center for Mathematical Research,
    Peking University, China
    ({zlxie@pku.edu.cn}).}%
  \and
  Zaiwen Wen\thanks{Corresponding author. Beijing International Center for Mathematical Research,
    Peking University, China
    ({wenzw@pku.edu.cn}).}
}
\begin{document}
\maketitle
\begin{abstract}
Parametric manifold optimization problems frequently arise in various machine learning tasks, where state functions are defined on infinite-dimensional manifolds. We propose a unified accelerated natural gradient descent (ANGD) framework to address these problems. By incorporating a Hessian-driven damping term into the manifold update, we derive an accelerated Riemannian gradient (ARG) flow that mitigates oscillations. An equivalent first-order system is further presented for the ARG flow, enabling a unified discretization scheme that leads to the ANGD method. In our discrete update, our framework considers various advanced techniques, including least squares approximation of the update direction, projected momentum to accelerate convergence, and efficient approximation methods through the Kronecker product. It accommodates various metrics, including $H^s$, Fisher-Rao, and Wasserstein-2 metrics, providing a computationally efficient solution for large-scale parameter spaces. We establish a convergence rate for the ARG flow under geodesic convexity assumptions. Numerical experiments demonstrate that ANGD outperforms standard NGD, underscoring its effectiveness across diverse deep learning tasks.
\end{abstract}

\keywords{Natural gradient, Parametric manifold, Riemannian optimization, Accelerated flow, Machine learning}

\section{Introduction}
We focus on the parametric  optimization problem of the form
\begin{align}
    \min_{\theta \in \mathbb{R}^p} L(\rho_\theta),
    \label{problem: form}
\end{align}
where \(\rho_{(\cdot)}: \Theta \to \mathcal{M}\) is a mapping from the parameter space \(\Theta \subseteq \mathbb{R}^p\) to the state space \(\mathcal{M}\), which inherently exhibits the structure of an infinite-dimensional Riemannian manifold. The loss functional is given as \(L(\cdot): \mathcal{M} \to \mathbb{R}\). Classical Riemannian optimization treats \(\theta\) as residing on a specific manifold, while in parametric manifold optimization, \(\rho\) lies on the manifold \(\mathcal{M}\) and is parameterized by \(\theta\). Typically, \(\rho_\theta\) is produced by a deep neural network with input \(x \in \mathbb{R}^d\) and parameters \(\theta \in \mathbb{R}^p\). This framework encompasses a wide range of machine learning and data analysis problems, where \(\mathcal{M}\) can be specialized to function spaces such as Sobolev spaces \(H^s\) or probability measures \(\mathcal{P}(\mathbb{R}^p)\). Two classic examples of this problem are the physics-informed neural networks (PINNs) \cite{Raissi2019Physics, Raissi2018deep} and variational Monte Carlo (VMC) methods \cite{pfau2020ab} for solving Schrödinger equations.   

\subsection{Literature Review}
Gradient flow is helpful in the analysis and design of accelerated optimization algorithms. The continuous limit of the Nesterov's acceleration method \cite{nesterov1983method} is known as the accelerated gradient flow \cite{su2016ode}. A Hessian-driven damping term is incorporated into high-resolution ODE in \cite{Shibin2021understanding}. A convergence analysis of accelerated ODE with Hessian-driven damping is provided in \cite{Attouch2022}. The extension of accelerated gradient flow on Riemannian manifolds has been extensively studied. Gradient flow on the manifold is introduced to analyze the Riemannian Nesterov accelerated gradient method in \cite{alimisis2020continuous}. Accelerated information gradient flow \cite{wang2022accelerated} extends it to infinite-dimensional Riemannian manifolds. Minimization of the KL divergence using the Nesterov acceleration method from a continuous perspective is explored in \cite{mA2021Is}. The \W-2 gradient flow and its parametrization are studied in \cite{JIN2025parameterized, wu2023parameterizedwassersteinhamiltonianflow}.

Implementing the traditional gradient descent method for high-dimensional nonconvex problems \eqref{problem: form} is a significant challenge. For PDE-based optimization problems, traditional gradient descent algorithms suffer from pathologies \cite{understanding2021wang, pmlr-v235-rathore24a}. Natural gradient methods are often used to enhance the performance of the gradient descent algorithm by incorporating local curvature information, leading to faster convergence \cite{Martens2020Newinsights, yang2023NGP, pmlr-v206-bahamou23a}. The natural gradient method pulls back the curvature of $\theta$ to the manifold \cite{Amari1998Natural}. This adjustment allows for more efficient search directions, overcoming some of the limitations of traditional gradient descent algorithm, such as slow convergence or getting stuck in local minima. Several modifications of the natural gradient method have been explored. The adaptively regularized natural gradient method \cite{Wu2024Convergence} introduces an adaptive damping term. General natural gradient descent methods for PDE-based problems are discussed in \cite{nurbekyan2023efficient}, which formulates least squares problems for general metrics. The energy natural gradient method \cite{Achieving2023Munk} is proposed to accelerate the PINNs training from the perspective of an energy metric. 

Recent advancements in natural gradient methods for probability distributions have largely focused on the Fisher-Rao and \W-2 metrics. The computation of the natural gradient direction for the Fisher-Rao metric is reformulated as a least squares problem in \cite{chen2024empowering}. This approach is enhanced by incorporating the projected momentum \cite{Goldshlager2024spring} (also known as the Kaczmarz method) to refine the solution. In the realm of \W-2 natural gradient methods, a computationally efficient modification of the \W information matrix is introduced by \cite{JIN2025parameterized}. Additionally, Arbel et al. estimate the natural gradient by solving a regularized mini-max problem \cite{arbel2019kernelized}, whereas the KL divergence is used to approximate  the \W-2 natural gradient \cite{neklyudov2023wasserstein}, thereby avoiding the intractable computation of the \W information matrix.

Various approximation methods have been proposed to reduce the computational cost of the natural gradient methods. KFAC \cite{Martens2015KFAC, pmlr-v48-grosse16} approximates the layer-wise information matrix using the Kronecker decomposition, allowing its inverse to be computed efficiently as the product of the inverses of these smaller matrices.  Sketch-based methods \cite{Yang2022Sketch} reduce the computational cost of matrix multiplication and inversion by sampling rows or columns, enabling efficient approximations of natural gradient. The quasi-natural gradient method \cite{He2022QuasiNG} integrates LBFGS with natural gradient descent method to improve efficiency in statistical learning problems. Layer-wise block-diagonal approximations \cite{pmlr-v206-bahamou23a} are used to approximate the information matrix.


\subsection{Our Contributions}
In this paper, we propose a new accelerated natural gradient method to solve \eqref{problem: form}. Our main contributions are as follows.
\begin{itemize}

\item We derive the natural gradient descent algorithm from a novel two-stage optimization process. The first stage identifies an update flow on the manifold, while the second projects this flow onto the parameter space for a discrete update scheme. This separation allows for distinct designs of the natural gradient method in the manifold and parameter space. Though taking the metric of the manifold into consideration \cite{Martens2020Newinsights}, traditional natural gradient methods do not seek acceleration for the first stage. We introduce an accelerated Riemannian gradient (ARG) flow to address this. A Hessian-driven damping term, designed to mitigate oscillations and accelerate convergence, is incorporated into the analysis of manifold-accelerated gradient flows for the first time. This ODE generalizes different first-order acceleration methods with different choices of hyper-parameters. Theoretical analysis shows that the ARG flow achieves a convergence rate of $\mathcal{O}(t^{-2})$ under geodesic convexity, strictly generalizing Euclidean acceleration mechanisms while maintaining their characteristic decay properties.

\item We present a novel algorithmic framework that unifies projection across different metric spaces in the second stage. This framework discretizes the update quantity of the ARG flow in the tangent space of the manifold and maps it to the parameter space, resulting in a unified accelerated natural gradient descent (ANGD) method. Consequently, the ANGD method achieves faster convergence compared to traditional natural gradient descent methods \cite{nurbekyan2023efficient}. Our approach generalizes several established methods, including the least squares approximation \cite{Yang2022Sketch}, projected momentum \cite{Goldshlager2024spring}, and KFAC \cite{Martens2015KFAC}. These methods provide increased flexibility in selecting preconditioners and eliminate the need to estimate metric-specific information matrices, thus improving computational efficiency, particularly for metrics with intractable information matrices, such as the $H^s$ metric (for $s<0$) and the \W-2 metric. Numerical experiments demonstrate the substantial acceleration of the ANGD method over non-accelerated natural gradient methods.

\end{itemize}

\subsection{Notation}
We use $\left\langle \cdot, \cdot \right\rangle$ to denote the inner product in Euclidean space. The spatial and parameter gradients are represented by $\nabla$ and $\partial_\theta$, respectively. For a time-varying map and input, we abbreviate $\partial_t f_t(x) \big|_{x=x_t}$ as $\partial_t f_t(x_t)$. The divergence of a vector-valued function $F$ is written as $\nabla \cdot F$. The symbol $\circ$ is used to indicate the composition of two maps. In integrals, we omit the differential notation $dx$ when there is no risk of ambiguity. We utilize $x[i]$ to refer to the $i$-th coordinate of the vector $x$. Given a vector $v \in \mathbb{R}^n$, its center value is denoted by $\overline{v}=v-\sum_{i=1}^n v[i]/n$, where the subtraction is applied element-wise.

\subsection{Organization}
The rest of this paper is organized as follows. In Section \ref{sec: continuous}, we introduce the ARG flow with Hessian drive damping. The general discretization scheme and approximation methods on the parameter space are discussed in Section \ref{sec: discrete}. Four kinds of specific metrics are considered in Section \ref{sec: Examples}. In Section \ref{sec: theory}, we give the convergence analysis for the ARG flow. Finally, we show the performance of the ANGD algorithms with different numerical experiments in Section \ref{sec: experiments}.

\section{A Continuous-time Model for Accelerating NGD} \label{sec: continuous}
In this section, we derive an accelerated gradient flow featuring Hessian-driven damping on the manifold $\mathcal{M}$. The trajectory of this flow exhibits a faster convergence rate and improved convergence properties compared with gradient flow. This continuous-time model serves as an ideal template for developing accelerated natural gradient descent methods on manifolds. Before proceeding, we provide a brief review of some basic concepts related to the Riemannian metric.


\subsection{Background on Riemannian metric}

We begin by outlining the definition of Riemannian metrics on an (infinitely dimensional) Riemannian manifold.
Let $\mathcal{M}$ be a set of functions that are defined on $\Omega$, a region in $\mathbb{R}^d$. The tangent space of $\mathcal{M}$ at $\rho(x) \in\mathcal{M}$ is defined as
\[
T_\rho\mathcal{M}=\{\partial_t \rho_t(x)|_{t=0}: \rho_t(x):(-1,1)\times \Omega \to \mathbb{R} \text{ is a smooth curve in }\mathcal{M} \text{ and } \rho_0=\rho\}.
\]
In this section, we omit the variable $x$ of the $\rho$ function when there is no risk of ambiguity. The cotangent space $T_\rho^*\mathcal{M}$ is the dual space of $T_\rho\mathcal{M}$, consisting of linear functionals acting on $T_\rho\mathcal{M}$ defined via the  $L^2$ inner product. The metric tensor $\mathcal{G}(\rho):T_\rho\mathcal{M}\to T_\rho^*\mathcal{M}$ is an invertible and Hermitian linear operator. It satisfies the following properties: 1) $\int \sigma_1 \mathcal{G}(\rho) \sigma_2 dx= \int \sigma_2 \mathcal{G}(\rho) \sigma_1 dx$ for all $\sigma_1, \sigma_2 \in T_\rho \mathcal{M}$; 2) $\int \sigma \mathcal{G}(\rho) \sigma dx\ge 0$ for all $\sigma \in T_\rho \mathcal{M}$, with equality holding if and only if $\sigma\equiv0$.
We then define the Riemannian metric on $T_\rho\mathcal{M}$ as 
\begin{align*}
    g_\rho(\sigma_1, \sigma_2) = \int \sigma_1 \mathcal{G}(\rho) \sigma_2 dx= \int \Phi_1 \mathcal{G}(\rho)^{-1} \Phi_2 dx, \; \forall \sigma_1, \sigma_2 \in T_\rho \mathcal{M},
\end{align*}
where $\Phi_i=\mathcal{G}(\rho) \sigma_i$. Given a metric, we can define the Riemannian distance between two elements $\rho_0,\rho_1\in \mathcal{M}$ as  $\text{dist}(\rho_0,\rho_1)^2=\inf \int_0^1 g_{\rho_t}(\partial_t\rho_t,\partial_t\rho_t)dt$, where the infimum is taken over all smooth curves ${\rho_t:[0,1]\to \mathcal{M}}$ connecting $\rho_0$ and $\rho_1$. 

Then we extend the concept of the Riemannian gradient from finite-dimensional Riemannian manifolds to the infinite-dimensional case following \cite{riemannianhilbert}. We introduce some basic notation about calculus variations. The $L^2$ first variation for a functional $L(\rho) : L^2\to \mathbb{R}$ with respect to  $\rho$ is as $\frac{\delta L}{\delta \rho}$, which is defined as the function such that 
$\lim\limits_{\epsilon\to 0}\frac{L(\rho+\epsilon\sigma)-L(\rho)}{\epsilon}=\int \frac{\delta L}{\delta \rho} \sigma,$
holds for any $\sigma\in L^2$. 
For any given $\rho_0\in L^2$, we denote $\left.\frac{\delta L}{\delta \rho}\right|_{\rho=\rho_0}$ by $\frac{\delta L}{\delta \rho_0}$ when there is no ambiguity. The first order variation with respect to a function $h(\rho)(x):L^2\times\mathbb{R}^d\to \mathbb{R}$ is given as
$\frac{\delta h}{\delta \rho}(x,y)=\frac{\delta }{\delta \rho}\int h(y)\delta(x-y)dy,$
where $\delta$ is the Dirac delta function.   
We denote the canonical action of a tangent field $h(\rho)(x)$ to a smooth functional $L(\rho)$ as $h\circ L(\rho)=\int \frac{\delta E}{\delta \rho}(x) \cdot h(\rho)(x) dx.$

Further, we define the directional variational derivative of a smooth linear mapping $\mathcal{A}(\rho):T_\rho \mathcal{M}\to T_\rho^* \mathcal{M}$. For any $\sigma\in T_\rho \mathcal{M}$, the directional variational derivative $\frac{\partial \mathcal{A}(\rho)}{\partial \rho}\cdot \sigma:T_\rho \mathcal{M}\to T_\rho^* \mathcal{M}$ is defined as
$
    \left[\frac{\partial \mathcal{A}(\rho)}{\partial \rho}\cdot \sigma\right]\tau =\lim_{\epsilon\to0} \frac{ \mathcal{A}(\rho+\epsilon\sigma)\tau-\mathcal{A}(\rho)\tau}{\epsilon},
$
for any fixed $\tau\in T_\rho \mathcal{M}$. This definition can be naturally extended to smooth linear mappings $\mathcal{A}(\rho):T_\rho^* \mathcal{M}\to T_\rho \mathcal{M}$. The Riemannian gradient of a smooth functional 
$L(\rho)$ on $\mathcal{M}$ is given as $\operatorname{grad} L(\rho)=\mathcal{G}(\rho)^{-1} \frac{\delta L}{\delta \rho}.$

The objects $\rho$ in this paper fall into two types, with their tangent and cotangent spaces exhibiting different properties.
\begin{enumerate}
    \item \textbf{$\rho$ as a PDE-based model.} In this case, we primarily consider $\rho \in H^s(\Omega)$ (with $s$ being an integer), where $\Omega$ is a bounded open domain or the entire Euclidean space $\mathbb{R}^d$. The Sobolev space $\mathcal{M}=H^s(\Omega)$ is a Hilbert space, with the tangent space $T_\rho\mathcal{M}=H^s(\Omega)$  and the cotangent space  $T_\rho^*\mathcal{M}=L^2(\Omega)$.
    \item \textbf{$\rho$ as a probability distribution.} Define the set of smooth probability densities on $\Omega$ as $\mathcal{M} = \mathcal{P}(\Omega) = \{\rho \in \mathcal{C}^{\infty}(\Omega) : \int_{\Omega} \rho  dx = 1, \, \rho \geq 0\}$, where $\Omega$ is an open set in $\mathbb{R}^d$. Here, $\mathcal{C}^{\infty}(\Omega)$ denotes the set of smooth functions defined on $\Omega$. In this case, it holds that 
        $T_\rho \mathcal{M}=\left\{\sigma \in \mathcal{C}^{\infty}(\Omega): \int \sigma d x=0\right\}$, $T_\rho^* \mathcal{M}=\mathcal{C}^{\infty}(\Omega) / \mathbb{R}$.
    For this case, we mainly consider the Fisher-Rao  and  \W-2 metric.
\end{enumerate}

\subsection{Accelerated Gradient Flow in Riemannian Manifolds}
We start with an optimization problem in finite-dimensional Euclidean space:
\begin{equation}
  \min\limits_{\varrho \in \mathbb{R}^p} \ell(\varrho), 
\end{equation}
where $\ell$ is a differentiable function. The gradient descent method updates 
$\varrho$ by following the steepest update direction, with the continuous counterpart described by $\dot{\varrho}_t +\nabla \ell(\varrho_t)=0$.  
Nesterov proposed an accelerated method (NAG) to improve the efficiency of gradient descent method for convex functions \cite{nesterov1983method}. A second-order ODE has been proposed to elucidate the acceleration mechanism corresponding to NAG in \cite{su2016ode} as $\ddot{\varrho}_t +\alpha_t \dot{\varrho}_t +\nabla \ell(\varrho_t)=0.$
A Hessian-driven damping term is introduced to the continuous Nesterov acceleration ODE by \cite{attouch2020fastconvexoptimizationinertial,Attouch2022}, which takes the form:
\begin{equation}\label{ISHD:flow}
 \ddot{\varrho}_t +\alpha_t \dot{\varrho}_t +\beta_t \nabla^2 \ell(\varrho_t)\dot{\varrho}_t+\gamma_t\nabla \ell(\varrho_t)=0,
\end{equation}
where $\alpha_t, \beta_t, \gamma_t$ are non-negative functions only depending on $t$. The Hessian-driven damping term $\nabla^2 \ell(\varrho_t)\dot{\varrho}_t$ facilitates faster convergence and reduces oscillations.

We extend these concepts to Riemannian manifolds. The Riemannian gradient flow of the target functional $L$ on Riemannian manifold $\mathcal{M}$ is defined as 
\begin{align}
    \partial_t \rho_t = - \text{grad} L(\rho_t)=-\mathcal{G}(\rho)^{-1} \frac{\delta L}{\delta \rho},
    \label{eq: vanilla gradient flow}
\end{align}
where $\rho_t : [t_0, \infty) \to \mathcal{M}$ is a smooth curve on the manifold. For the accelerated gradient flow on Riemannian manifolds, note that $\ddot{\varrho}_t=\frac{d}{dt}\frac{d}{dt}{\varrho}_t$ in the accelerated gradient flow \eqref{ISHD:flow} represents taking directional derivatives of $\frac{d}{dt}{\varrho}_t$ along $\frac{d}{dt}{\varrho}_t$. It holds for the damping term that $\nabla^2 f(\varrho_t)\dot{\varrho}_t= \frac{d}{dt}\nabla f(\varrho_t)$. Hence, by replacing the second-order term $\frac{d}{dt}\frac{d}{dt}{\varrho}_t$ and $\frac{d}{dt} \nabla f(\varrho_t)$ with the Levi-Civita connection and the Riemannian gradient as $\nabla_{\partial_t \rho_t} \partial_t \rho_t$ and $\nabla_{\partial_t \rho_t} \grad L(\rho_t)$, we obtain the accelerated Riemannian gradient flow with Hessian-driven damping:
\begin{equation}\label{R_acc:flow}
 \nabla_{\partial_t \rho_t} \partial_t \rho_t+\alpha_t \partial_t \rho_t +\beta_t \nabla_{\partial_t \rho_t} \grad L(\rho_t)+\gamma_t\grad L(\rho_t)=0.   
\end{equation}
Note that Levi-Civita connection in \eqref{R_acc:flow} is not straightforward to discrete with respect to time $t$. To derive the appropriate first order ODE formulation for numerical discretization, which only contains first order derivative to time and space, we apply the following transformation.
\begin{proposition} \label{prop: Riemannian system}
Define $\Phi_t=\Psi_t-\beta_t \frac{\delta L}{\delta \rho_t}$ and  the Riemannian correction term 
\[
R_t= \frac{1}{2} \left.\frac{\delta}{\delta \rho} \left[ \int \Phi_t \mathcal{G}(\rho)^{-1} \Phi_t \, dx \right]\right|_{\rho=\rho_t}+\frac{\beta_t}{2}\mathcal{G}(\rho_t)\left[\frac{\partial (\mathcal{G}(\rho_t)^{-1})}{\partial \rho_t}\cdot \mathcal{G}(\rho_t)^{-1}\Phi_t\right]\frac{\delta L}{\delta \rho_t}.
\] 
The second-order accelerated Riemannian gradient flow \eqref{R_acc:flow} is equivalent to 
\begin{numcases}{}
    \partial_t \rho_t - \mathcal{G}(\rho_t)^{-1} \left( \Psi_t - \beta_t \frac{\delta L}{\delta \rho_t} \right) = 0, \label{ode : accelerate} \\
    \partial_t \Psi_t + \alpha_t \Psi_t + R_t + \left( \gamma_t - \dot{\beta}_t - \alpha_t \beta_t \right) \frac{\delta L}{\delta \rho_t} = 0, \label{ode : accelerate_2}
\end{numcases}
with initial values $\rho_0$ and $\Psi_0=\beta_0\frac{\delta L}{\delta \rho_0}$. 
\end{proposition}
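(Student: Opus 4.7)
The plan is to perform a momentum-variable reduction of the second-order covariant equation \eqref{R_acc:flow}. I introduce the cotangent momentum $\Phi_t := \mathcal{G}(\rho_t)\partial_t\rho_t$, so that $\Psi_t = \Phi_t + \beta_t\frac{\delta L}{\delta\rho_t}$ by definition. Inverting $\mathcal{G}(\rho_t)$ immediately yields $\partial_t\rho_t = \mathcal{G}(\rho_t)^{-1}\bigl(\Psi_t - \beta_t\frac{\delta L}{\delta\rho_t}\bigr)$, which is exactly \eqref{ode : accelerate}; the stated initial condition $\Psi_0 = \beta_0\frac{\delta L}{\delta\rho_0}$ is equivalent to $\Phi_0 = 0$, the standard zero-velocity start for accelerated flows.

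To obtain \eqref{ode : accelerate_2}, I first refactor \eqref{R_acc:flow}. Because $\beta_t$ depends only on $t$, the Leibniz rule for the Levi--Civita connection gives $\nabla_{\partial_t\rho_t}\bigl(\beta_t\grad L(\rho_t)\bigr) = \dot\beta_t\grad L(\rho_t) + \beta_t\nabla_{\partial_t\rho_t}\grad L(\rho_t)$, so the two covariant-derivative terms in \eqref{R_acc:flow} coalesce into $\nabla_{\partial_t\rho_t} Y_t$ with $Y_t := \partial_t\rho_t + \beta_t\grad L(\rho_t)$, leaving a residual $-\dot\beta_t\grad L(\rho_t)$. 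Since $\mathcal{G}(\rho_t)Y_t = \Psi_t$, applying $\mathcal{G}(\rho_t)$ to the refactored equation and substituting $\mathcal{G}(\rho_t)\partial_t\rho_t = \Psi_t - \beta_t\frac{\delta L}{\delta\rho_t}$ produces
\[
\mathcal{G}(\rho_t)[\nabla_{\partial_t\rho_t}Y_t] + \alpha_t\Psi_t + (\gamma_t-\dot\beta_t-\alpha_t\beta_t)\frac{\delta L}{\delta\rho_t} = 0.
\]
It therefore suffices to identify $\mathcal{G}(\rho_t)[\nabla_{\partial_t\rho_t}Y_t]$ with $\partial_t\Psi_t + R_t$.

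I split the covariant derivative along $Y_t$ into a velocity piece and a gradient piece. For the velocity piece, the Hamiltonian formulation of the geodesic flow with kinetic energy $H(\rho,\Phi) = \tfrac{1}{2}\int \Phi\mathcal{G}(\rho)^{-1}\Phi\,dx$ supplies the general identity
\[
\mathcal{G}(\rho_t)[\nabla_{\partial_t\rho_t}\partial_t\rho_t] - \partial_t\Phi_t = \tfrac{1}{2}\left.\frac{\delta}{\delta\rho}\int\Phi_t\mathcal{G}(\rho)^{-1}\Phi_t\,dx\right|_{\rho=\rho_t},
\]
which is precisely the first summand of $R_t$. For the gradient piece, I expand $\grad L = \mathcal{G}(\rho)^{-1}\frac{\delta L}{\delta\rho}$, apply the chain-rule identity $\partial(\mathcal{G}^{-1})/\partial\rho = -\mathcal{G}^{-1}(\partial\mathcal{G}/\partial\rho)\mathcal{G}^{-1}$, and use the Koszul formula for the Levi--Civita connection of $g_\rho$ to compute $\mathcal{G}(\rho_t)[\nabla_{\partial_t\rho_t}\grad L(\rho_t)]$. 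Subtracting $\beta_t\,\partial_t\frac{\delta L}{\delta\rho_t}$, which is produced by differentiating the $\beta_t\frac{\delta L}{\delta\rho_t}$ piece of $\Psi_t$, leaves exactly $\tfrac{\beta_t}{2}\mathcal{G}(\rho_t)\bigl[\frac{\partial(\mathcal{G}(\rho_t)^{-1})}{\partial\rho_t}\cdot\mathcal{G}(\rho_t)^{-1}\Phi_t\bigr]\frac{\delta L}{\delta\rho_t}$, the second summand of $R_t$.

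The main obstacle is this last Koszul computation in the infinite-dimensional setting. Translating the classical Koszul identity $2g(\nabla_X Y,Z) = Xg(Y,Z) + Yg(X,Z) - Zg(X,Y)$ into functional-derivative language, one has to verify that the symmetric contributions from the three Christoffel-type terms cancel against the naive time derivative of $\frac{\delta L}{\delta\rho_t}$, using the symmetry of the operator $\mathcal{G}(\rho)$ and of its directional variations, and that the residual piece assembles into the compact operator form $\tfrac{1}{2}\mathcal{G}(\rho_t)[\partial(\mathcal{G}(\rho_t)^{-1})/\partial\rho_t\cdot\partial_t\rho_t]$ acting on $\frac{\delta L}{\delta\rho_t}$. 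The remaining ingredients (Leibniz rule, inversion of $\mathcal{G}$, and the kinetic Hamiltonian identity) are essentially bookkeeping.
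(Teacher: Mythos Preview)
Your plan is correct and is essentially the paper's own argument: compute $\mathcal{G}(\rho_t)\nabla_{\partial_t\rho_t}\partial_t\rho_t$ (your ``Hamiltonian identity'' is exactly the paper's equation \eqref{2-covariant-d}) and $\mathcal{G}(\rho_t)\nabla_{\partial_t\rho_t}\grad L(\rho_t)$ via Koszul, substitute into \eqref{R_acc:flow}, then pass from $\Phi_t$ to $\Psi_t$; your $Y_t$-grouping is only a cosmetic reshuffling of the same two pieces. One caution on the step you flag as the main obstacle: the Koszul identity you quote is the three-term version valid for commuting fields, but here the Lie brackets $[\partial_t\rho_t,\grad L]$, $[h_t,\partial_t\rho_t]$, $[h_t,\grad L]$ do \emph{not} vanish in the functional-derivative sense, and the paper has to use the full six-term formula and evaluate these brackets explicitly (see \eqref{P1:3}, \eqref{Lie:4}, \eqref{Lie:6}); it is precisely these bracket contributions that produce the cancellations you anticipate and collapse $\mathcal{G}(\rho_t)\nabla_{\partial_t\rho_t}\grad L$ to $\partial_t\frac{\delta L}{\delta\rho_t}+\tfrac{1}{2}\mathcal{G}(\rho_t)\bigl[\tfrac{\partial(\mathcal{G}(\rho_t)^{-1})}{\partial\rho_t}\cdot\partial_t\rho_t\bigr]\frac{\delta L}{\delta\rho_t}$.
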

The proof of this proposition can be found in Section \ref{sec: theory-riemannian system}.

By incorporating the specific metric $\mathcal{G}$ into equation \eqref{ode : accelerate} and \eqref{ode : accelerate_2}, we can derive the accelerated flow corresponding to each concrete metric. We refer the reader to \cite{nurbekyan2023efficient} and \cite{wang2022accelerated} for foundational information on the metrics.

\begin{example}[$L^2$ ARG flow]
Note that $L^2=H^0$ is a special case of $H^s$ space. Since in $L^2$ space, $\mathcal{G}(\rho_t) = Id$, the ARG flow for $(\rho_t,\Psi_t)$ in \eqref{ode : accelerate} can be greatly simplified as:
\begin{align}\label{ode : L2}
    \begin{cases}
        \partial_t \rho_t = \Psi_t - \beta_t \frac{\delta L}{\delta \rho_t} , \\
        \partial_t \Psi_t + \alpha_t \Psi_t -(\alpha_t\beta_t - \gamma_t +  \dot{\beta}_t) \frac{\delta L}{\delta \rho_t}= 0.
    \end{cases}
\end{align}
\end{example}

\begin{example}[$H^s$ ($s\ge0$) ARG flow]
$H^s$ ($s\ge0$) ARG flow satisfies
\begin{equation}\label{eq: Hspos ode}
\left\{\begin{array}{l}
\partial_t \rho_t-\left[\sum_{i=0}^{s}(-\Delta)^i \right]^{-1}\left(\Psi_t - \beta_t \frac{\delta L}{\delta \rho_t}\right) =0, \\
\partial_t \Psi_t + \alpha_t \Psi_t -(\alpha_t\beta_t - \gamma_t +  \dot{\beta}_t) \frac{\delta L}{\delta \rho_t}= 0.
\end{array}\right. 
\end{equation}
\end{example}

\begin{example}[$H^s$ ($s<0$) ARG flow]
$H^s$ ($s<0$) ARG flow satisfies
\begin{equation}\label{eq: Hsneg ode}
\left\{\begin{array}{l}
\partial_t \rho_t-\sum_{i=0}^{|s|}(-\Delta)^i\left(\Psi_t - \beta_t \frac{\delta L}{\delta \rho_t}\right)  =0, \\
\partial_t \Psi_t + \alpha_t \Psi_t -(\alpha_t\beta_t - \gamma_t +  \dot{\beta}_t)\frac{\delta L}{\delta \rho_t}= 0.
\end{array}\right. 
\end{equation}
\end{example}

\begin{example}[Fisher-Rao ARG flow]
Fisher-Rao ARG flow satisfies
\begin{equation}
\label{Fisher-Rao:flow}
\left\{
\begin{aligned}
    &\partial_t \rho_t - \left(\Psi_t - \beta_t \frac{\delta L}{\delta \rho_t} - \mathbb{E}_{\rho_t}\left[\Psi_t - \beta_t \frac{\delta L}{\delta \rho_t}\right]\right) \rho_t = 0, \\
    &\partial_t \Psi_t + \alpha_t \Psi_t 
    + \frac{1}{2} \left(\Phi_t
    - \mathbb{E}_{\rho_t}\left[\Phi_t \right]\right)\left(\Psi_t  
    - \mathbb{E}_{\rho_t}\left[\Psi_t\right]\right)   
    -(\alpha_t\beta_t - \gamma_t +  \dot{\beta}_t)  \frac{\delta L}{\delta \rho_t} = 0.
\end{aligned}
\right.
\end{equation}
\end{example}

\begin{example}[\W ARG flow]
\W ARG flow satisfies
\begin{equation}\label{W2:flow}
\left\{\begin{array}{l}
\partial_t \rho_t+\nabla\cdot \left(\rho_t\nabla \left(\Psi_t - \beta_t \frac{\delta L}{\delta \rho_t}\right)\right) =0, \\
\partial_t \Psi_t+\alpha_t \Psi_t+\frac{1}{2} \left\|\nabla \Phi_t\right\|^2+\frac{\beta_t}{2}w_t-(\alpha_t\beta_t - \gamma_t +  \dot{\beta}_t)\frac{\delta L}{\delta \rho_t}=0 ,
\end{array}\right. 
\end{equation}
where $w_t$ is a solution of $\nabla\cdot\left(h_t\nabla\frac{\delta L}{\delta \rho_t}-\rho_t\nabla w_t\right)=0$ with   $h_t=\nabla\cdot(\rho_t\nabla \Phi_t)$.
\end{example}

\section{A Discretization Scheme on the Parameter Space}\label{sec: discrete}
Along the trajectory of the ARG flow \eqref{R_acc:flow}, $L$ converges to its minimum. However, the discrete updates are performed in terms of the parameters $\theta \in \Theta$. Therefore, we need to map the iterations from the manifold back to the parameters. For some complicated metrics, we transform the original ODE of \(\partial_t \rho_t(x)\) in \eqref{ode : accelerate} as follows
\begin{equation}\label{tran-rho}
   \partial_t\mathcal{A}(\rho_t)(x)=u_t(x),
\end{equation}
where $\mathcal{A}$ is a metric-specific map with function output, $u_t(x)$ is obtained based on $\mathcal{A}$ and the original $\mathcal{G}(\rho_t)^{-1} \left( \Psi_t - \beta_t \frac{\delta L}{\delta \rho_t} \right)$ in \eqref{ode : accelerate}. The purpose of this transformation is to adapt to various problem structures. For example, when dealing with a normalized probability density $\rho_t(x)=\psi_t(x)/\int \psi_t(x) dx$, the quantity
$\partial_t \log \rho_t(x)=\partial_t \log \psi_t(x)-\mathbb{E}_{\rho_t}[\partial_t \log \psi_t]$ can be easily estimated, while
$\partial_t \rho_t(x)$ is not directly accessible. Specific examples of \eqref{tran-rho} are provided in Table \ref{table: a and ut}. 
\begin{table}[htbp]
    \renewcommand{\arraystretch}{2.5}
    \centering
    \resizebox{\textwidth}{!}{
    \begin{tabular}{|c|c|c|c|c|c|c|}
        \hline
          Metric& $\mathcal{G}(\rho_t) ^{-1}\Phi_t$&$\mathcal{A}(\rho_t)$&$\mathcal{S}(\rho_t)$&$\mathcal{B}(\rho_t)$&$u_t$&$q_t$\\
        \hline
        Fisher-Rao    &$\left(\Phi_t-\int \Phi_t \rho_t d x\right)\rho_t$&$\log \rho_t$&$\log \rho_t$&$\log \rho_t$&$-\left(\Phi_t-\int \Phi_t \rho_t d x\right)$&$\rho_t$\\
        \hline
         \W-2 &$-\nabla  \cdot ( \rho_t  \nabla \Phi_t) $&$\log \rho_t$&$\log \rho_t$&$\log \rho_t$&$-\left\langle\nabla \text{log} \rho_t, \nabla \Phi_t\right\rangle-\Delta\Phi_t$&$\rho_t$\\
        \hline
       $H^s$ ($s\in \mathbb{N}$)    &$ \left[\sum_{i=0}^s(-\Delta)^i\right]^{-1} \Phi_t$&$ \rho_t$&$ \left[\sum_{i=0}^s(-\Delta)^i\right]\rho_t$&$\mathbf{D}^s \rho_t$&$-\left[\sum_{i=0}^s(-\Delta)^i\right]^{-1} \Phi_t$&Leb. 
       \\
        \hline
        $H^s$ ($s\in \mathbb{Z}^{-}$)  &$\left[\sum_{i=0}^{|s|}(-\Delta)^i\right] \Phi_t$&$ \rho_t$&$ \rho_t$&$\rho_t$&$-\left[\sum_{i=0}^s(-\Delta)^i\right]\Phi_t$&Leb.
       \\
        \hline
    \end{tabular}
    }
    \caption{Examples of metric-specific transformed ARG flow \eqref{tran-rho}. Leb. denotes the Lebesgue measure in  Euclidean spaces.} 
    \label{table: a and ut}
\end{table}

\subsection{A Unified Discretization Scheme}\label{sec: general discretized scheme}
In this subsection, we discuss a unified discretization scheme for \eqref{tran-rho} and \eqref{ode : accelerate_2}. To clarify the notation, we use \(t\) as a subscript for continuous-time variables, \(k\) as a subscript for discrete-time variables. The core procedure is as follows. We obtain the first-order system corresponding to \(\partial_t \rho_t(x)\) in~\eqref{tran-rho} and \(\partial_t \Psi_t(x)\) in~\eqref{ode : accelerate_2}. Introducing parameters \(\theta\) into the update of \(\rho_t(x)\), it yields a parameterized ODE where \(\rho_{\theta_t}(x)\) approximates \(\rho_t(x)\). We denote the update direction for $\theta_k$ at the $k$-th iteration as $d_k$, which can be seen as an approximation of continuous parameter update $\partial_t \theta_t$ at time $t_k$. Denote the step size at the $k$-th iteration as $h_k$. The update rule for $\theta_k$ is given as
\begin{align}\label{eq: parameter update}
    \theta_{k+1} = \theta_k + h_k d_k.
\end{align}
Sampling both sides of the parameterized ODE for \(\rho_{\theta_t}(x)\) produces a linear system for the update direction \(d_k\). Moreover, we derive the update rule for \(\Psi_t(x)\) from a direct discretization, which is used to update the right-hand side of the linear system.  
We summarize the associated notations in Table~\ref{table: notations for general}.
\begin{table}[htbp]
\centering
\begin{tabular}{|c|c|c|c|} 
\hline
 & Continuous Time & Parametrized   & Discrete Time  \\ \hline
 Parameters & not occurred & $\theta_t$ & $\theta_k$  \\ \hline
 Update Direction & not occurred & $\partial_t \theta_t$ & $d_k$\\ \hline
 $\rho$-manifold& $\rho_t(x)$ & $\rho_{\theta_t} (x)$ & $\rho_{\theta_k}(x)$ \\ \hline
 $\Psi$-momentum & $\Psi_t(x)$& not used  & $\Psi_k(x)$ \\ \hline
 Riemannian correction term & $R_t(x)$ &not used & $R_k(x)$ \\ \hline
\end{tabular}
\caption{Different notations used for the discretized scheme.}
\label{table: notations for general}
\end{table}

We next discuss how to form a linear system to get $d_k$. By the chain rule, it yields
\begin{align}
    \partial_t \mathcal{A} (\rho_{\theta_t})(x) = (\partial_\theta \mathcal{A}(\rho_{\theta_t})(x))^\top \partial_t \theta_t ,
\end{align}
where $\partial_\theta \mathcal{A}(\rho_{\theta_t})(x) = [\partial_{\theta[1]} \mathcal{A}(\rho_{\theta_t})(x), \cdots, \partial_{\theta[p]} \mathcal{A}(\rho_{\theta_t})(x) ]^\top$ are $p$ functions on the tangent space. 
Hence, we construct the following parameterized ODE:
\begin{align}\label{eq: theta continuous}    (\partial_{\theta} \mathcal{A}(\rho_{\theta_t})(x))^\top \partial_t \theta_t = u_t(x),\quad \forall \; x\in \mathbb{R}^d.
\end{align}
Equation \eqref{eq: theta continuous} shows that two functions are equal at the tangent space. We can only take a finite set of sample points $\{x^j_k\}_{j=1}^n$ to approximate these functions with matrices and vectors. Therefore, based on these samples, we build a linear system approximating \eqref{eq: theta continuous} to find a suitable update direction $d_k$. It can also be seen as the discretization of $\partial_t \theta_t$ at the $k$-th iteration. The right-hand side of \eqref{eq: theta continuous} becomes a real number for a specific sample. For the left-hand side, the function vector $\partial_\theta \mathcal{A}(\rho_{\theta_t})(x_k^j)$ turns into a vector in $\mathbb{R}^p$. Given the complexity of our networks, we can assume that the number of parameters $p$ exceeds the sample size $n$. This leads to an underdetermined system as follows:
\begin{align} \label{eq: single line of general system}
    (\partial_\theta \mathcal{A}(\rho_{\theta_k}) (x^j_k))^\top  d_k = u_k (x^j_k), \quad \forall \; 1\leq j \leq s.
\end{align}
For specific problems, the samples $\{x^j_k\}_{j=1}^n$ may be either fixed or changeable. 
For simplicity, we denote the following notations:
\begin{equation}
    \begin{aligned}
       O_k&=[\partial_{\theta} \mathcal{A}(\rho_{\theta_k})(x_k^1),...,\partial_{\theta} \mathcal{A}(\rho_{\theta_k})(x_k^n)]^\top\in\mathbb{R}^{n \times p} ,   \\ b_k&=\left[u_k(x_k^1),...,u_k(x_k^n)\right]^\top\in\mathbb{R}^{n\times 1}.
       \nonumber
    \end{aligned}
\end{equation}
The system \eqref{eq: single line of general system} can be rewritten as
\begin{align}\label{eq: general abstract system}
    O_k d_k = b_k.
\end{align}
The method of solving system \eqref{eq: general abstract system} varies depending on the metric and the optimization problem. We summarize some general approaches in Section \ref{sec: solution method of linear system}.

Now we focus on the update rule for $\Psi_k(x_k^j)$, which is used to update $b_k$. Since $\Psi_t(x)$ serves as an auxiliary variable in the ODEs \eqref{ode : accelerate} and \eqref{ode : accelerate_2}, we approximate $\partial_t \Psi_t(x)$ at the $k$-th step using a forward difference over the time interval $h_{k}$ at sample points as follows
\begin{align}\label{eq: Psi discrete}
    \partial_t \Psi_t(x_k^j) \big|_{t=t_k} \approx \frac{\Psi_k(x_k^j) - \Psi_{k-1}(x_k^j)}{h_{k}}.
\end{align}
For terms in \eqref{ode : accelerate_2} which are not related to the time derivative, we simply use their values at $t_k$. This gives us the update for $\Psi_k(x^j_k)$ as follows:
\begin{align}\label{eq: Psi general update}
    \Psi_k(x_k^j) = \mu_k \Psi_{k-1}(x_k^j) - h_{k} \left( R_k(x_k^j) + \left( \gamma_k - \dot{\beta}_k - \alpha_k \beta_k \right) \frac{\delta L}{\delta \rho_{\theta_k}}(x_k^j) \right), 
\end{align}
where $\mu_k = {1-h_{k}\alpha_k}$. The update scheme for $\Psi_k(x_k^j)$  may vary between different metrics, which will be thoroughly discussed when considering specific metrics.

In each iteration, \(\Psi_k(x_k^j)\) is computed using \eqref{eq: Psi general update}, which in turn determines \(b_k\). We then solve equation \eqref{eq: general abstract system} for the update direction \(d_k\), and update the parameters according to equation \eqref{eq: parameter update}. The general method of solving \eqref{eq: general abstract system} is discussed in Section \ref{sec: solution method of linear system}. For some cases, $b_k$ in \eqref{eq: general abstract system} is hard to get. We discuss methods to deal with these cases in Section \ref{sec-b-unavailable}. 

\subsection{Direct Methods for Solving \eqref{eq: general abstract system}}\label{sec: solution method of linear system}
As an underdetermined system, directly solving the pseudo-inverse of $O_k$ for \eqref{eq: general abstract system} by singular value decomposition is time-consuming. Without loss of generality, suppose that $O_k$ has full row rank. We propose three different algorithms to solve this system. We can multiply $O_k^\top $ on both sides of the system \eqref{eq: general abstract system}. Since $O_k^\top O_k$ is a positive semidefinite matrix, it can be inverted by adding a damping term. The direction is given as
\begin{align}\label{eq: new system for bk}
     d_k = (O_k^\top  O_k)^\dagger O_k^\top  b_k .
\end{align}
From the singular value decomposition, the update \eqref{eq: new system for bk} is also equivalent to 
\begin{align}\label{eq: solution of d_2}
    d_k = O_k^\top  (O_k O_k^\top )^{-1} b_k. 
\end{align}

However, as the norm of the solution in \eqref{eq: new system for bk} increases with the number of samples, directly selection leads to significant errors, especially for small sample sizes. To address this issue, we can incorporate momentum by projecting the historical solution onto the solution set. The parameter update direction $d_k$ is computed iteratively as:
\begin{equation}\label{eq: solution of d_3}
  d_k=  O_k^\top (O_kO_k^\top )^{-1}b_k+\eta (I-O_k^\top (O_kO_k^\top )^{-1}O_k)d_{k-1},
\end{equation}
where $0<\eta<1$ is the decay rate, and $I-O_k^\top (OO_k^\top )^{-1}O_k$ is the projection to the null space of $O_k$.  By projecting the momentum onto the subspace, we can effectively reduce the error in estimating the true solution.


\subsection{Handling the Unavailability of $b_k$}\label{sec-b-unavailable}
In some cases, the target function \(L\) involves high-order derivatives. Computing \(\frac{\delta L}{\delta \rho_{\theta_k}}(x_k^j)\) of \(\Phi_k(x_k^j) \) in \(u_k(x_k^j)\) requires these derivatives, which are often challenging to evaluate directly. Hence, the update \eqref{eq: solution of d_2} and \eqref{eq: solution of d_3} can not be used here since we can not get the value of $b_k$. To address the unavailability of \(b_k\), we leverage an indirect approach.  Although \(u_k(x)\) cannot be computed directly,  it can be approximated through a composition involving another quantity, \(\mathcal{S}(\rho_{\theta_k}(x))\), enabling its subsequent use in computations. 
We define 
\begin{equation}     
S_k=\left[\partial_{\theta}\mathcal{S}(\rho_{\theta_k})(x_k^1),...,\partial_{\theta}\mathcal{S}(\rho_{\theta_k})(x_k^n)\right]^\top\in\mathbb{R}^{n \times p}
       \nonumber
\end{equation}
and assume it has full row rank. Consequently, for any $d_k\in \mathbb{R}^{p \times 1}$, \eqref{eq: general abstract system} holds if and only if the following equation holds
\begin{equation}\label{eq: equiv system with s}
    \frac{1}{n}S_k^\top O_k d_k=\frac{1}{n}S_k^\top b_k.
\end{equation}
This equivalence enables us to reformulate the problem and solve \eqref{eq: general abstract system}. However, the non-symmetric form of the precondition matrix on the left-hand side may lead to numerical instability. To address this, we assume the existence of an equivalent transformation that reformulates the original integral into a semi-definite form. Assume our samples $\{x_k^j\}$ are generated from the distribution $q_k(x)$ (see Table \ref{table: a and ut} for more examples). Therefore, the summation over samples $S_k^\top O_k$ can be viewed as an expectation as 
\begin{align} \label{eq: approx_sto}
    \frac{1}{n} S_k^\top O_k\approx& \mathbb{E}_{x\sim q_k(x)} \left[\partial_\theta \mathcal{S}(\rho_{\theta_k})(x)\partial_\theta\mathcal{A}(\rho_{\theta_k})(x)^\top\right].
\end{align}
Assume that through integral by parts, the right hand side of \eqref{eq: approx_sto} can be reformulated into a semi-definite form using a function-valued map $\mathcal{B}(\rho_{\theta_k})$ as: 
\begin{align*}
    \mathbb{E}_{x\sim q_k(x)} \left[\partial_\theta \mathcal{S}(\rho_{\theta_k})(x)\partial_\theta\mathcal{A}(\rho_{\theta_k})(x)^\top\right] = \mathbb{E}_{x\sim q_k(x)} \left[\partial_\theta \mathcal{B}(\rho_{\theta_k})(x)\partial_\theta\mathcal{B}(\rho_{\theta_k})(x)^\top\right].
\end{align*}
Denote $B_k = \left[\partial_{\theta}\mathcal{B}(\rho_{\theta_k})(x_k^1),...,\partial_{\theta}\mathcal{B}(\rho_{\theta_k})(x_k^n)\right]^\top$ as the discretization of $\partial_{\theta}\mathcal{B}(\rho_{\theta_k})$ on samples. It yields the following approximation 
\begin{align*}
    \frac{1}{n} B_k^\top B_k \approx \mathbb{E}_{x\sim q_k(x)} \left[\partial_\theta \mathcal{B}(\rho_{\theta_k})(x)\partial_\theta\mathcal{B}(\rho_{\theta_k})(x)^\top\right].
\end{align*}
Consequently, we can use $B_k^\top B_k$ to approximate $S_k^\top O_k$ in \eqref{eq: equiv system with s}.

A similar approach is also used to estimate the right hand side of \eqref{eq: equiv system with s} as $v_k$. Further details of practical methods for certain metric can be found in Table \ref{table: a and ut} and the following sections. 
Hence, the remaining task is to compute the update direction
\begin{align}\label{eq: modified update with B}
    d_k=\left(\frac{1}{n}B_k^\top B_k\right)^\dagger v_k,
\end{align}
where various methods, such as KFAC, can be employed to efficiently handle the matrix-inverse-vector product.
{\remark In most cases, we take $\mathcal{S} = \mathcal{A}$ directly with $B_k = O_k$. The update \eqref{eq: modified update with B} can be seen as a least squares solution of the original problem \eqref{eq: general abstract system}.}

\subsection{Approximation Algorithms}\label{sec : approx}
Directly calculating $(O_k^\top O_k)^\dagger \in \mathbb{R}^{p \times p}$ in \eqref{eq: new system for bk} and $\left(B_k^\top B_k\right)^\dagger$ in \eqref{eq: modified update with B} is complicated due to the huge matrix size.  In this subsection, an approximation technique using the Kronecker decomposition is introduced to reduce the calculation overhead. We focus on the $k$-th iteration, and omit the subscript for simplicity.
   
 Consider the state variable $\rho_\theta \in \mathcal{M}$ is parameterized by a feedforward network with $K$ layers with a collection of the weight matrices for each layer $\theta = \left( \vecop(W_1)^\top,\cdots,\vecop(W_K)^\top \right)^\top \in \mathbb{R}^p$. We omit the bias term since it can be incorporated into the weight matrix. Let $\theta^{(l)} = \vecop(W_l) \in \mathbb{R}^{n_l n_{l-1}}$, $b_i=\partial_{\theta^{(l)}} \mathcal{A}(\rho_{\theta^{(l)}})(x^i)$ and $G_l =\frac{1}{n} \sum_{i=1}^n b_ib_i^\top$. The preconditioner  in \eqref{eq: modified update with B}, \eqref{eq: modified update with B Hs}, and \eqref{eq: modified update with B W2} can be approximated as a block diagonal matrix  
    $\text{Diag}\{G_1, G_2, \cdots, G_K \}$.
 For certain input $x^i$, we denote the input of the $l$-th layer by $a_{l-1}^i$, and $s_l^i = W_l a_{l-1}^i$. It follows that 
\begin{align}
    \begin{aligned}
     \partial_{\theta^{(l)}} \mathcal{A}(\rho_{\theta^{(l)}})(x^i) & =\vecop \left(\frac{\partial  \mathcal{A}(\rho_{\theta^{(l)}})(x^i)}{\partial s_l^i}{a_{l-1}^i}^\top\right) =a_{l-1}^i \otimes \frac{\partial   \mathcal{A}(\rho_{\theta^{(l)}})(x^i)}{\partial s_l^i}.
    \end{aligned}
\end{align}
Assuming that $a_{l-1}$ and $\frac{\partial   \mathcal{A}(\rho_{\theta^{(l)}})}{\partial s_l}$ are independent with respect to the sample distribution, we can approximate $G_l$ by
\begin{align} \label{eq: kfac-approx}
    \begin{aligned}
         G_l 
        &=\frac{1}{n}\sum_{i=1} ^n ( a_{l-1}^i (a_{l-1}^i)^\top ) \otimes  \left( \frac{\partial   \mathcal{A}(\rho_{\theta^{(l)}})(x^i)}{\partial s_l^i} ^\top \frac{\partial   \mathcal{A}(\rho_{\theta^{(l)}})(x^i)}{\partial s_l^i}  \right) \\
        & \approx   \underbrace{\left(\frac{1}{n}\sum_{i=1} ^n a_{l-1}^i (a_{l-1}^i)^\top \right)}_{A_{l-1}} \otimes  \underbrace{\left( \frac{1}{n}\sum_{i=1} ^n \frac{\partial   \mathcal{A}(\rho_{\theta^{(l)}})(x^i)}{\partial s_l^i} ^\top \frac{\partial   \mathcal{A}(\rho_{\theta^{(l)}})(x^i)}{\partial s_l^i}  \right)}_{S_l}.
    \end{aligned}
\end{align}
It yields $G_l^{\dagger} \approx (A_{l-1} \otimes S_l)^{\dagger}=A_{l-1}^{\dagger} \otimes S_l^{\dagger}$.  This approximation efficiently reduces the computation overhead for the matrix inverse from $\mathcal{O}((n_ln_{l-1})^3)$ to $\mathcal{O}(n_l^3 + n_{l-1}^3)$.

\section{Accelerated Natural Gradient Methods with Specific Metrics}
\label{sec: Examples}
\subsection{$L^2$ Metric}
The $L^2$ metric mainly serves for PDE-based optimization problems, where the functional mapping $\mathcal{A}(\rho)$ and metric tensor $\mathcal{G}(\rho)$ are both the identity. In this case, we focus on working with fixed sampled points, thus omitting the subscripts for the samples. We have
\begin{equation}
b_k=\left[\left(\Psi_k-\beta_k\frac{\delta L}{\delta \rho_{\theta_k}}\right)(x^1),...,\left(\Psi_k-\beta_k\frac{\delta L}{\delta \rho_{\theta_k}}\right)(x^n)\right]^\top. \nonumber
\end{equation}
From \eqref{ode : L2} and \eqref{eq: Psi general update}, the update rule for each $\Psi_k(x^j)$ is given as
\begin{align}\label{eq : update discrete Psi}
    \Psi_k(x^j) =  \mu_k\Psi_{k-1}(x^j) - h_{k} \left( \gamma_k - \dot{\beta}_k - \alpha_k \beta_k \right) \frac{\delta L}{\delta \rho_{\theta_k}}(x^j) .
\end{align}
In this case, $b_k$ is not available since calculating $\frac{\delta L}{\delta \rho_{\theta_k}}(x^j)$ involves high order derivatives, which are difficult to obtain. Hence, we apply \eqref{eq: modified update with B} with $B_k =O_k$ to get $d_k$ for $L^2$ metric. For $v_k = O_k^\top  b_k$ in \eqref{eq: modified update with B}, the following equation holds
\begin{align}\label{eq: Otb}
   v_k = O_k^\top  b_k = \underbrace{\sum_{j=1}^n \partial_\theta \rho_{\theta_k}(x^j) \cdot \Psi_k(x^j)}_{(A)}  - \beta_k  \underbrace{\sum_{j=1}^n\partial_\theta \rho_{\theta_k}(x^j) \cdot \frac{\delta L}{\delta \rho_{\theta_k}} (x^j)}_{(B)}. 
\end{align}
We cannot obtain the (A) and (B) parts in equation \eqref{eq: Otb} through direct multiplication. However, notice that by the chain rule, it holds
\begin{align}\label{eq: chain rule relation}
    \partial_\theta L(\rho_{\theta_k}) = \int \partial_\theta \rho_{\theta_k}(x) \frac{\delta L}{\delta \rho_{\theta_k}}(x)dx \approx \frac{1}{n}\sum_{j=1}^n \partial_\theta \rho_{\theta_k}(x^j) \cdot \frac{\delta L}{\delta \rho_{\theta_k}} (x^j).
\end{align}
The approximation arises from the correlation between the continuous integral in the inner product and the discrete summation over the samples. From \eqref{eq: chain rule relation}, we directly use $\partial_\theta L(\rho_{\theta_k})$ to calculate (B) in \eqref{eq: Otb}. The calculation of $\partial_\theta L(\rho_{\theta_k})$ can be done by automatic differentiation and is easy to get. For (A) in \eqref{eq: Otb}, we introduce an iterative approximation technique to establish an iterative update rule for $v_k$ from $v_{k-1}$ and the gradient towards $\theta$ of the target function $L$. From \eqref{eq : update discrete Psi} and \eqref{eq: Otb}, it yields
\begin{align}\label{eq: calc in w}
    \begin{aligned}
        v_k[i] 
         =& \sum_{j=1}^n   \partial_{\theta[i]}\rho_{\theta_k}(x^j) \left(\mu_k  \Psi_{k-1}(x^j) - \left(\mu_k\beta_k + h_k (\gamma_k - \dot{\beta}_k) \right)\frac{\delta L}{\delta \rho_{\theta_k}} (x^j) \right).
    \end{aligned}
\end{align}
For the last line in \eqref{eq: calc in w}, we can only approximate $\sum_{j=1}^n \partial_{\theta[i]}\rho_{\theta_{k-1}}(x^j) \Psi_{k-1}(x^j) $ from the last $v_{k-1}$. Assuming that the discrete step size is small enough on the manifold, $\rho_{\theta_k}$ and $\rho_{\theta_{k-1}}$ are not far away on the tangent space. Hence, we can approximate $\partial_{\theta_i} \rho_{\theta_{k-1}}$ with $\partial_{\theta_{[i]}} \rho_{\theta_k}$. We employ the following approximation
\begin{align*}
    \sum_{j=1}^n   \partial_{\theta[i]}\rho_{\theta_k}(x^j) \Psi_{k-1}(x^j) \approx \sum_{j=1}^n   \partial_{\theta[i]}\rho_{\theta_{k-1}}(x^j) \Psi_{k-1}(x^j).
\end{align*}
Further we can get the following equation:
\begin{align*}
    \sum_{j=1}^n   \partial_{\theta[i]}\rho_{\theta_{k-1}}(x^j) \Psi_{k-1}(x^j)=v_{k-1}[i] + \sum_{j=1}^n  \beta_{k-1} \partial_{\theta[i]}\rho_{\theta_{k-1}}(x^j) \frac{\delta L}{\delta \rho_{\theta_{k-1}}}(x^j) .
\end{align*}
From this approximation, taking \eqref{eq: chain rule relation} into consideration, the following update rule holds for the vector ${w}_k$ to approximate the original $v_k$:
\begin{align}
     w_k = \mu_k \left( w_{k-1} + n\beta_{k-1} \nabla_{\theta} L(\rho_{\theta_{k-1}})\right)  - n\left(\mu_k\beta_k + h_k (\gamma_k - \dot{\beta}_k) \right) \nabla_\theta L(\rho_{\theta_k}) .
    \label{eq : L2 mom new}
\end{align}
Finally, we can derive the accelerated $L^2$ gradient descent method as Algorithm \ref{alg : L2_1}.
\begin{algorithm}[htbp]
\caption{Accelerated $L^2$ Natural Gradient Descent}
\textbf{Input:} Initial parameters $\theta_0$, step sizes $h_k$, decay rates $\{\alpha_k\}, \{\beta_k\}, \{\gamma_k\}$\\
\textbf{Output:} Updated parameters $\theta_{T}$
\begin{algorithmic}[1]
\FOR{$k = 1, \dots, T-1$}
    \STATE Update $w_k$ according to \eqref{eq : L2 mom new}.
    \STATE Calculate $(O_k^\top O_k)^\dagger$ through samples.
    \STATE Calculate update direction $d_k = (O_k^\top O_k)^\dagger w_k $.
    \STATE Update model parameters $\theta_{k+1} = \theta_k + h_k d_k$.
\ENDFOR
\end{algorithmic}
\label{alg : L2_1}
\end{algorithm}

\subsection{$H^s$ Metric}
The $H^s$ metric is a generalization of the basic $L^2$ metric. According to Table \ref{table: a and ut}, the quantities $O_k$ and $b_k$ in \eqref{eq: general abstract system} are given as: 
\begin{equation}
    \begin{aligned}
       O_k&=[\partial_{\theta}  \rho_{\theta_k}(x^1),...,\partial_{\theta}\rho_{\theta_k}(x^n)]^\top ,   \\ b_k&=\left[ ((\mathbf{D}^s)^* \mathbf{D}^s)^{-1}\left(\Psi_k-\beta_k\frac{\delta L}{\delta \rho_{\theta_k}}\right)(x^1),..., ((\mathbf{D}^s)^* \mathbf{D}^s)^{-1}\left(\Psi_k-\beta_k\frac{\delta L}{\delta \rho_{\theta_k}}\right)(x^n)\right]^\top,
       \nonumber
    \end{aligned}
\end{equation}
where $\mathbf{D}^s \sigma$ is a vector consisting of all derivatives of $\sigma$ up to order $s$. In this case, the term $b_k$ is computationally intractable due to the existence of high-order (positive or negative) derivatives. To overcome this difficulty, we follow the approach outlined in Section \ref{sec-b-unavailable} by selecting $\mathcal{S}(\rho)$ as given in Table \ref{table: a and ut}. 
The treatment of the equivalent system \eqref{eq: equiv system with s} depends on whether $s$ is positive or negative.
\paragraph{Case 1: $s>0$}
We first introduce the following definitions: 
\begin{equation}
    \begin{aligned}
       S_k&=[\partial_{\theta}  ((\mathbf{D}^s)^* \mathbf{D}^s)\rho_{\theta_k}(x^1),...,\partial_{\theta}((\mathbf{D}^s)^* \mathbf{D}^s)\rho_{\theta_k}(x^n)]^\top \in \mathbb{R}^{n\times p},   \\ B_k^{\mathcal{D}}&=[\partial_{\theta}  \mathcal{D}\rho_{\theta_k}(x^1),...,\partial_{\theta}\mathcal{D}\rho_{\theta_k}(x^n)]^\top \in \mathbb{R}^{n\times p},\quad \mathcal{D}\in \mathbf{D}^s,
       \\ z_k&=\left[  \left(\Psi_k-\beta_k\frac{\delta L}{\delta \rho_{\theta_k}}\right)(x^1),..., \left(\Psi_k-\beta_k\frac{\delta L}{\delta \rho_{\theta_k}}\right)(x^n)\right]^\top\in \mathbb{R}^{p\times 1}.
       \nonumber
    \end{aligned}
\end{equation}
Integration by parts then shows that:
\begin{equation}\label{Hs: SO_prod}
    \begin{aligned}
       \frac{1}{n}S_k^\top O_k\approx& \int  \partial_{\theta}((\mathbf{D}^s)^* \mathbf{D}^s) \rho_{\theta_k}(x) \partial_{\theta}\rho_{\theta_k}(x)^\top dx
       \\=&\int  \partial_{\theta}\mathbf{D}^s \rho_{\theta_k}(x) \partial_{\theta}\mathbf{D}^s\rho_{\theta_k}(x)^\top dx 
       \approx \frac{1}{n}\sum_{\mathcal{D}\in \mathbf{D}^s} (B_k^{\mathcal{D}})^\top B_k^{\mathcal{D}}.
    \end{aligned}
\end{equation}
It also holds that
\begin{equation}\label{Hs: SO_prod_1}
    \begin{aligned}
       \frac{1}{n}S_k^\top b_k \approx& \int  \partial_{\theta}((\mathbf{D}^s)^* \mathbf{D}^s) \rho_{\theta_k}(x) ((\mathbf{D}^s)^* \mathbf{D}^s)^{-1}\left(\Psi_k-\beta_k\frac{\delta L}{\delta \rho_{\theta_k}}\right)(x) dx\\
     =& \int  \partial_{\theta}\rho_{\theta_k}(x)\left(\Psi_k-\beta_k\frac{\delta L}{\delta \rho_{\theta_k}}\right)(x) dx
     \approx \frac{1}{n}O_k^\top z_k.
    \end{aligned}
\end{equation}
The approximation is given through the connection between integral of functions and the summation over samples. Thus, the update direction $d_k$ is given by: 
\begin{align}\label{eq: modified update with B Hs}
    d_k = \left(\frac{1}{n}\sum_{\mathcal{D}\in \mathbf{D}^s} (B_k^{\mathcal{D}})^\top B_k^{\mathcal{D}}\right)^{\dagger} \left(\frac{1}{n}O_k^\top z_k\right).
\end{align}
It is worth noting that the term $O_k^\top z_k$ coincides with $v_k$ from the $L^2$ case. Hence, its update can be approximated by that of $w_k$. 

\paragraph{Case 2: $s<0$}

Similar to \eqref{Hs: SO_prod} and \eqref{Hs: SO_prod_1}, we employ integration by parts to redistribute the high-order derivatives to other terms, thereby alleviating the computational complexity of handling $b_k$.  To be specific, we take $S_k=O_k$ and 
\begin{equation}
    \begin{aligned}
       B_k^{\mathcal{D}}&=[\partial_{\theta}  \mathcal{D}\rho_{\theta_k}(x^1),...,\partial_{\theta}\mathcal{D}\rho_{\theta_k}(x^n)]^\top \in \mathbb{R}^{n\times p},\quad \mathcal{D}\in \mathbf{D}^{|s|},
       \\ z_k^{\mathcal{D}}&=\left[  \mathcal{D}\left(\Psi_k-\beta_k\frac{\delta L}{\delta \rho_{\theta_k}}\right)(x^1),..., \mathcal{D}\left(\Psi_k-\beta_k\frac{\delta L}{\delta \rho_{\theta_k}}\right)(x^n)\right]^\top\in \mathbb{R}^{p\times 1},\quad \mathcal{D}\in \mathbf{D}^{|s|}.
       \nonumber
    \end{aligned}
\end{equation}
It yields the approximation:
\begin{equation}\label{Hs_neg: Sb_prod}
    \begin{aligned}
       \frac{1}{n}S_k^\top b_k \approx& \int  \partial_{\theta} \rho_{\theta_k}(x) ((\mathbf{D}^s)^* \mathbf{D}^s)\left(\Psi_k-\beta_k\frac{\delta L}{\delta \rho_{\theta_k}}\right)(x) dx\\
     =& \int  \partial_{\theta}\mathbf{D}^s\rho_{\theta_k}(x)\mathbf{D}^s\left(\Psi_k-\beta_k\frac{\delta L}{\delta \rho_{\theta_k}}\right)(x) dx
     \approx  \frac{1}{n}\sum_{\mathcal{D}\in \mathbf{D}^s} (B_k^{\mathcal{D}})^\top z_k^{\mathcal{D}}.
    \end{aligned}
\end{equation}
Furthermore, analogous to \eqref{eq: Psi general update}, each $z_k^{\mathcal{D}}$ can be computed via the iterative update:  
\begin{align} 
    \mathcal{D}\Psi_k(x^j) = \mu_k \mathcal{D}\Psi_{k-1}(x^j) +h_{k-1}     \left( \gamma_k - \dot{\beta}_k - \alpha_k \beta_k \right) \mathcal{D}\frac{\delta L}{\delta \rho_{\theta_k}}(x^j) , \quad k\ge 1.
\end{align}

\subsection{Fisher-Rao metric}
\label{sec: discrete FR}
Beyond PDE-based models, we now consider $\rho_\theta$ as a parameterized probability density function.  The $\rho$-trajectory in the Fisher-Rao ARG flow \eqref{Fisher-Rao:flow} can be reformulated as $\partial_t \log \rho_t-\left(\Phi_t-\mathbb{E}_{\rho_t}\left[\Phi_t\right]\right) =0,$
where $\Phi_t=\Psi_t - \beta_t \frac{\delta L}{\delta \rho_t}$. 
By defining 
\begin{equation}
    \begin{aligned}
       O_k&=[\partial_{\theta}\log(\rho_{\theta_k})(x_k^1),...,\partial_{\theta}\log(\rho_{\theta_k})(x_k^n)]^\top\in\mathbb{R}^{n\times p} ,   \\b_k&=\left[\overline{\Phi_k}(x_k^1),...,\overline{\Phi_k}(x_k^n)\right]^\top\in\mathbb{R}^{n\times 1},
       \nonumber
    \end{aligned}
\end{equation}
we provide the explicit form of the linear system previously introduced in \eqref{eq: general abstract system}.

Next we discretize  $\Psi$-trajectory for updating the cotangent variable. To estimate $\mathbb{E}_{\rho_k}\left[\Phi_k\right]$ at the $k$-th iteration, it is necessary to update samples $\{x_k^i\}_{i=1}^n \sim \rho_{\theta_k}$ and evaluate $\Phi_k$ at these points. However, a fundamental challenge arises: storing a function state variable $\Phi_k$ for each $x \in \mathbb{R}^d$ is intractable. To address this, we use the solution of the previous linear system, $d_{k-1}$, to estimate centered values of $\Phi_k$ at samples $\{x_k^i\}_{i=1}^n$ in the $k$-iteration as follows:
 \begin{equation}
     \overline{\Phi}_{k-1}(x_k^i)\overset{\triangle}{=}\left\langle \partial_\theta \log \rho_{\theta_{k-1}}(x_k^i), d_{k-1} \right\rangle. 
     \nonumber
 \end{equation} 
 Specifically, we discretize the $\Psi$-trajectory as:
\begin{subequations}
\begin{align}
\frac{\overline{\Psi}_{k}(x_k^i) - \overline{\Psi}_{k-1}(x_k^i)}{h_k} &= - \alpha_k \overline{\Phi}_{k-1}(x_k^i) - \frac{1}{2} \overline{\Phi}_{k-1}(x_k^i)\,\overline{\Psi}_{k-1}(x_k^i)  - (\gamma_k - \dot{\beta}_k) \overline{\frac{\delta L}{\delta \rho_{\theta_k}}}(x_k^i), 
\nonumber
\end{align}
\end{subequations}
where $\overline{\Psi}_{k-1}(x_k^i)=\overline{\Phi}_{k-1}(x_k^i)+\beta_{k-1}\overline{\frac{\delta L}{\delta \rho_{\theta_{k-1}}}}(x_k^i)$. Here we use the centralized cotangent variable over samples since centralization does not influence the iteration update. Consequently, the update for the cotangent variable is derived as:
 \begin{equation}\label{Fisher_cot:update}
 \begin{aligned}
 \overline{\Phi}_{k}(x_k^i)=& (1-h_k\alpha_k)\overline{\Phi}_{k-1}(x_k^i)
    - \frac{h_k}{2} \overline{\Phi}_{k-1}(x_k^i)\,\overline{\Psi}_{k-1}(x_k^i) 
    \\&+(\beta_{k}-h_k\dot{\beta}_k-h_k\gamma_k)\overline{\frac{\delta L}{\delta \rho_{\theta_k}}}(x_k^i)-\beta_{k-1}\overline{\frac{\delta L}{\delta \rho_{\theta_{k-1}}}}(x_k^i).
      \end{aligned}
 \end{equation}

Based on the above discussion, we are ready to present the ANGD method in Algorithm \ref{alg:fisher-rao} for the Fisher-Rao metric.
 
\begin{algorithm}[htbp]
\caption{Accelerated Fisher-Rao Natural Gradient Descent}
\textbf{Input:} Initial model  parameters $\theta_0$, step sizes $\{h_k\}$, decay rates $\{\alpha_k\}$, $\{\beta_k\}$, $\{\gamma_k\}$.\\
\textbf{Output:} Updated model  parameters $\theta_{K}$.

\begin{algorithmic}[1]\label{alg:fisher-rao}
\STATE Initialize  $\Phi_{0}=\mathbf{0}\in\mathbb{R}^{n}$.
\STATE Sample  $\{x_0^i\}_{i=1}^n\overset{\text{i.i.d.}}{\sim} \rho_{\theta_0}$.
\FOR{$k = 0, 1, \dots, K-1$}
        \STATE Estimate $\overline{\Phi}_{k-1}(x_k^i)= \left\langle\partial_\theta\log \rho_{\theta_{k-1}} (x_k^i), d_{k-1}\right\rangle$ for $1\le i\le n$.
        \STATE Update the  cotangent variable $b_k[i]\overset{\triangle}{=}\overline{\Phi}_k (x_k^i)$ according to \eqref{Fisher_cot:update} for $1\le i\le n$.
    \STATE Compute the update direction $d_k$ by solving $O_k d_k = b_k$.
    \STATE Update model parameters $
    \theta_{t+1} = \theta_k + h_k d_k.$
            \STATE Update samples $x^i_{k+1}$ based on $x_k^i$ via sampling methods for $1\le i\le n$.
\ENDFOR
\end{algorithmic}
\end{algorithm}

\subsection{\W-2 metric}
The evolution of probability density can equivalently be interpreted as the movement of particles. From the perspective of the continuity equation, \eqref{W2:flow} explicitly defines the velocity of each particle 
$x_t\sim\rho_t$   as $\dot{x}_t=\nabla \Phi_t(x_t)$, where $\Phi_t=\Psi_t-\beta_t \frac{\delta L}{\delta \rho_t}$. Substituting this into the evolution of $\Psi_t$ in \eqref{W2:flow} and taking the spatial gradient, we obtain the particle-wise velocity evolution: 
\begin{equation} 
\begin{aligned}
\frac{d}{dt}\left[\nabla\Psi_t(x_t)\right]
&= -\alpha_t \nabla\Psi_t+\beta_t\underbrace{\left(\nabla^2\frac{\delta L}{\delta \rho_t}\nabla \Phi_t-\frac{1}{2}\nabla w_t\right)}_{W_{1,t}}+\eta_t\nabla\frac{\delta L}{\delta \rho_t}.
       \nonumber
\end{aligned}
\end{equation}
Note that the evolution of $\log \rho_t$
 given by \eqref{W2:flow} is $\partial_t\log \rho_t+\left\langle\nabla\log \rho_t,\nabla \Phi_t\right\rangle+\Delta\Phi_t=0$, which also  involves second-order derivatives of $\Phi_t$ (or $\Psi_t$).  To capture these dynamics, we further examine the evolution of the spatial Hessian at time-varying samples:
 \begin{equation}
     \begin{aligned}
         \frac{d}{dt}\left[\nabla^2 \Psi_t(x_t)\right]
         &=-\alpha_t \nabla^2\Psi_t+\beta_t\underbrace{\left(\nabla^3\frac{\delta L}{\delta \rho_t}\nabla \Phi_t-\frac{1}{2}\nabla^2 w_t\right)}_{W_{2,t}}-[\nabla^2 \Phi_t]^2+\eta_t\nabla^2\frac{\delta L}{\delta \rho_t}.
         \nonumber
     \end{aligned}
 \end{equation}
 Thus the full particle-density evolution system is summarized as follows:
 \begin{equation}
\left\{
\begin{aligned}
&\partial_t x_t-\nabla\Phi_t(x_t)=0,
    \\&\partial_t \log\rho_t(x) + \left\langle \nabla\Phi_t(x),\nabla\log \rho_t(x)\right\rangle+\Delta\Phi_t(x)  = 0, \hspace{30mm} (\text{fixed }x)\\
    &\frac{d}{dt}\left[ \nabla \Psi_t(x_t)\right] +\alpha_t \nabla\Psi_t(x_t)-\beta_t W_{1,t}-\eta_t\nabla\frac{\delta L}{\delta \rho_t}(x_t)=0,  \\
    &\frac{d}{dt}\left[ \nabla^2 \Psi_t(x_t)\right] +\alpha_t \nabla^2\Psi_t(x_t)-\beta_t W_{2,t}+[\nabla^2\Phi_t(x_t)]^2
   -\eta_t\nabla^2\frac{\delta L}{\delta \rho_t}(x_t)=0,
\end{aligned}
\right.
\nonumber
\end{equation}
where $\Phi_t=\Psi_t+\beta_t\frac{\delta L}{\delta \rho_t}$.
Unlike the Fisher-Rao metric, which requires tracking updates of the cotangent variable at all spatial points in $\mathbb{R}^d$, the current framework for \W-2 metric only needs updates at sample points, making it suitable for practical applications. 
However, a computational challenge arises in evaluating 
 $\nabla w_t$ and $\nabla^2 w_t$. The condition  $\nabla\cdot\left(h_t\nabla\frac{\delta L}{\delta \rho_t}-\rho_t\nabla w_t\right)=0$ does not implies $\rho_t\nabla w_t=h_t\nabla\frac{\delta L}{\delta \rho_t}$, as $\frac{h_t}{\rho_t}\nabla\frac{\delta L}{\delta \rho_t}$ is generally not curl-free and therefore can not guaranteed to be a gradient. To simplify, we approximate $\nabla w_t$ and $\nabla^2 w_t$ as zero, or set $\beta_t\equiv0$ for computational efficiency. 

Now we focus on the case where $\beta_t\equiv0$, which implies  $\Phi_t=\Psi_t$. The linear system \eqref{eq: general abstract system} is specified by taking:
\begin{equation}
    \begin{aligned}
    O_k&=[\partial_{\theta}\log(\rho_{\theta_k})(x_k^1),...,\partial_{\theta}\log(\rho_{\theta_k})(x_k^n)]^\top\in\mathbb{R}^{n\times p},
\\b_k&=\left[\frac{\nabla\cdot(\rho_{\theta_k}\nabla\Phi_k)(x_k^1)}{\rho_{\theta_k}(x_k^1)},...,\frac{\nabla\cdot(\rho_{\theta_k}\nabla\Phi_k)(x_k^n)}{\rho_{\theta_k}(x_k^n)}\right]^\top\in\mathbb{R}^{n\times 1}.
\nonumber
    \end{aligned}
\end{equation}
 Despite this simplification, additional challenges persist. Estimating $b_k$ is computationally intractable due to the difficulty of directly computing or storing the spatial Hessian in most practical scenarios. This aligns with the case discussed in Section \ref{sec-b-unavailable}. To address this, we apply integration by parts, yielding the following identity:
\[\mathbb{E}_{x\sim\rho_{\theta_k}} \left[\frac{\nabla\cdot(\rho_{\theta_k}\nabla\Phi_k)(x)}{\rho_{\theta_k}(x)}\partial_{\theta} \log \rho_{\theta_k}(x)\right]=-\mathbb{E}_{x\sim\rho_{\theta_k}}\left[\partial_{\theta}\left\langle\nabla \log \rho_{\theta_k}(x),\nabla\Phi_k(x)\right\rangle\right],
\] where the right-hand side can be efficiently estimated by sampling and automatic differentiation. Thus we can estimate $d_k$ by 
solving 
\begin{equation}\label{eq: modified update with B W2}
        \left(\frac{1}{n}
    O_k^\top O_k\right)d_k=\frac{1}{n}\sum_{i=1}^n\partial_{\theta}\left\langle\nabla \log \rho_{\theta_k}(x_k^i),\nabla\Phi_{k}(x_k^i)\right\rangle.
    \nonumber
    \end{equation}
This approach requires only the computation of first-order derivatives, eliminating the need for second-order derivatives. 

Synthesizing all the above insights, we can now formulate the ANGD method for Wasserstein-2 metric in Algorithm \ref{alg : W2}. To address discretization errors, we usually  incorporate additional sampling steps (e.g., MCMC) for $x_k^i+h_kV_{k+1}^i$, ensuring that $x_{k+1}^i(1
\le i\le n)$ tend to follow $\rho_{\theta_{k+1}}$.
\begin{algorithm}[htbp]
\caption{Accelerated \W-2 Natural Gradient Descent}
\textbf{Input:} Initial  parameters $\theta_0$, step sizes $\{h_k\}$, decay rates  $\{\alpha_k\}$, $\{\beta_k\}$, $\{\gamma_k\}$.\\
\textbf{Output:} Updated model parameters $\theta_{K}$.

\begin{algorithmic}[1]\label{alg : W2}
\STATE Initialize $V_0^i=\mathbf{0}\in \mathbb{R}^d$ for $1\le i\le n$.
\STATE Sample  $\{x_0^i\}_{i=1}^n\overset{\text{i.i.d.}}{\sim} \rho_{\theta_0}$.
\FOR{$k = 0, 1, \dots, K-1$}

 \STATE Update cotangent gradients  $V_{k+1}^i=(1-\alpha_k h_k)V_{k}^i-h_k\nabla\frac{\delta L}{\delta \rho_{\theta_k}}(x_k^i)$ for $1\le i\le n$.

    \STATE Solve the parameter update direction $d_k$  from 
    \begin{equation}\label{ls:alg-W2}
        \left(\frac{1}{n}
    O_k^\top O_k\right) d_k = \frac{1}{n}\sum_{i=1}^n\partial_{\theta}\left\langle\nabla \log \rho_{\theta_k}(x_k^i),V_{k+1}^i\right\rangle.
    \nonumber
    \end{equation}
    \STATE Update model parameters $
    \theta_{k+1} = \theta_k + h_k d_k.$
     \STATE Update samples $x^i_{k+1}$ based on $x_k^i+h_kV_{k+1}^i$ for $1\le i\le n$.
\ENDFOR
\end{algorithmic}
\end{algorithm}

\section{Theoretical Analysis}\label{sec: theory}
In this section, we present a theoretical analysis of the ARG flow \eqref{R_acc:flow}. First, we rigorously establish the equivalence between \eqref{R_acc:flow} and the system composed of coupled ODEs (\ref{ode : accelerate})–(\ref{ode : accelerate_2}) through the proof of Proposition \ref{prop: Riemannian system}. Subsequently, we derive convergence guarantees for the ARG flow \eqref{R_acc:flow} under the geodesic convexity assumption.

\subsection{Proof of Proposition \ref{prop: Riemannian system}}\label{sec: theory-riemannian system}
\begin{proof}
In the following proof, for notational convenience, we assume that any smooth tangent field defined on curve $\rho_t$ possesses a local extension, which does not impact the final conclusion. We begin by computing $\nabla_{\partial_t \rho_t} \partial_t \rho_t$ using the Koszul formula. For any tangent field $h_t$ along $\rho_t$, it holds:
\begin{equation}\label{P1:0}
\begin{aligned}
    g_{\rho_t}\left(\nabla_{\partial_t \rho_t} \partial_t \rho_t,h_t\right)=&\underbrace{\frac{d}{dt}g_{\rho_t}\left(\partial_t \rho_t,h_t\right)}_{(A)}-\frac{1}{2}\underbrace{h_t \circ g_{\rho_t}\left(\partial_t \rho_t,\partial_t \rho_t\right)}_{(B)}
    \\&+g_{\rho_t}\left(\partial_t \rho_t,[h_t,\partial_t \rho_t]\right),
    \end{aligned}
\end{equation}
where $[\cdot,\cdot]$ denotes Lie brackets. Define $\Phi_t=\mathcal{G}(\rho_t)\partial_t \rho_t \in {T}^*_{\rho_t}\mathcal{M}$, the components on the right side of \eqref{P1:0} can be evaluated using calculus rules:
\begin{equation}\label{P1:1}
\begin{aligned}
 (A)= \frac{d}{dt}  \int \Phi_t h_t dx 
 = \int \partial_t\Phi_t h_t dx+  \int \Phi_t \partial_t h_t dx.
 \end{aligned}
\end{equation}
Besides for component $(B)$, it yields
\begin{equation}\label{P1:2}
  \hfill  (B)=\int \frac{\delta \left(g_{\rho_t}\left(\partial_t \rho_t,\partial_t \rho_t\right)\right)}{\delta \rho_t}  h_t dx + 2\int \mathcal{G}(\rho_t)\partial_t \rho_t(x) \int \frac{\delta}{\delta \rho_t}\partial_t\rho_t(x,y)h_t(y)dy dx.
\end{equation}
Then we compute $[h_t,\partial_t \rho_t]$. For any smooth functional $E(\rho)$, it holds
\begin{equation}
\begin{aligned}\relax
[h_t,\partial_t \rho_t] E(\rho_t)&=h_t\circ \frac{d}{d t} E(\rho_t)-\frac{d}{dt} \int \frac{\delta}{\delta \rho_t} E(\rho_t) h_t dx
\\&=\int \frac{\delta E}{\delta \rho_t} (x)  \left(\int \frac{\delta}{\delta \rho_t}\partial_t\rho_t(x,y)h_t(y)dy-\partial_t h_t(x)\right) dx,
\end{aligned} \nonumber
\end{equation}
where the last equality holds due to  $\frac{\delta}{\delta \rho_t}\frac{d}{d t} E(\rho_t)=\frac{d}{d t}\frac{\delta}{\delta \rho_t} E(\rho_t)$. 
It implies:
\begin{equation}\label{P1:3}
    [h_t,\partial_t \rho_t](x)=\int \frac{\delta}{\delta \rho_t}\partial_t\rho_t(x,y)h_t(y)dy-\partial_t h_t(x).
\end{equation}
Substituting \eqref{P1:1}, \eqref{P1:2}, and  \eqref{P1:3} into \eqref{P1:0}, it gives: 
\begin{equation}\label{P1:4}
\begin{aligned}
    g_{\rho_t}\left(\nabla_{\partial_t \rho_t} \partial_t \rho_t,h_t\right)=\int \partial_t\Phi_t h_t dx-\frac{1}{2}\int \frac{\delta \left(g_{\rho_t}\left(\partial_t \rho_t,\partial_t \rho_t\right)\right)}{\delta \rho_t}  h_t dx.
    \end{aligned}
\end{equation}
Consequently, we have
\begin{equation}\label{2-covariant-d}
   \mathcal{G}(\rho_t) \nabla_{\partial_t \rho_t} \partial_t \rho_t=\partial_t\Phi_t-\frac{1}{2} \frac{\delta }{\delta \rho_t}\left[g_{\rho_t}\left(\partial_t \rho_t,\partial_t \rho_t\right)\right].
\end{equation}

Next we analyze the expression for $\nabla_{\partial_t\rho_t}\grad L$ using the Koszul formula again.  For any smooth tangent field $h_t$ along the curve $\rho_t$, it holds that:
\begin{equation}\label{R-Hess-damping-comp}
\begin{aligned}
    &2g_{\rho_t}(\nabla_{\partial_t\rho_t}\grad L, h_t)
    \\=&\underbrace{\partial_t \rho_t\circ g_{\rho_t}(\grad L, h_t)}_{(C)}+\underbrace{\grad L\circ g_{\rho_t}(\partial_t \rho_t, h_t)}_{(D)}
    -\underbrace{h_t\circ   g_{\rho_t}(\grad L, \partial_t\rho_t)}_{(E)}
    \\&+\underbrace{g_{\rho_t}([\partial_t\rho_t,\grad L],h_t)}_{(F)}+\underbrace{g_{\rho_t}([h_t,\partial_t\rho_t],\grad L)}_{(G)}+\underbrace{g_{\rho_t}([h_t,\grad L],\partial_t\rho_t)}_{(H)}.
    \end{aligned}
\end{equation}
Calculating the terms using the chain rule gives: 
\begin{equation}\label{metric:1-3-5}
\begin{aligned}
    &(C)-(E)+(G)
    \\=&\partial_t \rho_t\circ(h_t\circ\grad L)-h_t\circ(   \partial_t \rho_t\circ\grad L)+[h_t,\partial_t\rho_t]\circ\grad L = 0,
\end{aligned}
\end{equation}
and it holds for $(D)$ that
\begin{equation}\label{metric:2}
\begin{aligned}
(D) =& \grad L\circ\int \partial_t \rho_t \mathcal{G}(\rho_t) h_t 
\\=&\iint \frac{\delta h_t}{\delta \rho_t}(x,y)\grad L(y)dy\cdot \mathcal{G}(\rho_t) \partial_t \rho_t(x) dx    
    \\&+\iint \frac{\delta h_t}{\delta \rho_t}(x,y)\grad L(y)dy\cdot \mathcal{G}(\rho_t) \partial_t \rho_t(x) dx 
    \\&+ \int\partial_t \rho_t \left(\frac{\partial \mathcal{G}}{\partial \rho_t} \grad L\right)h_tdx  \int\mathcal{G}(\rho_t) h_t(x) \int \frac{\delta}{\delta \rho_t}\partial_t\rho_t(x,y)\grad L(y)dy dx.
\end{aligned}
\end{equation}
For $(F)$ and $(H)$, we use a test functional $E(\rho)$:
\begin{equation}
\begin{aligned}
\relax 
 [\partial_t\rho_t,\grad L]\circ E
 &=\frac{d}{dt} \int \frac{\delta L}{\delta \rho_t}\grad L-\grad L \circ \partial_t E(\rho_t)
 \\&=\int \frac{\delta L}{\delta \rho_t} \partial_t \grad L(\rho_t)-\int \frac{\delta L}{\delta \rho_t}(x) \int\frac{\delta }{\delta \rho_t} \partial_t\rho_t(x,y) \grad L(y)dydx,
 \nonumber
 \end{aligned}
\end{equation}
where the last equality hold due to the self-adjoint property of $\frac{\delta^2 L}{\delta \rho_t^2}$. This leads to:  
\begin{equation}\label{Lie:4}[\partial_t\rho_t,\grad L](x)=\partial_t \grad L(x)-\int\frac{\delta }{\delta \rho_t} \partial_t\rho_t(x,y) \grad L(y)dy.
\end{equation} 
Similarly, we obtain: 
\begin{equation}\label{Lie:6}[h_t,\grad L](x)=\int\frac{\delta }{\delta \rho_t} \grad L(x,y) h_t(y)dy-\int\frac{\delta }{\delta \rho_t} h_t(x,y) \grad L(y)dy.
\end{equation} 
Substituting \eqref{metric:1-3-5}, \eqref{metric:2}, \eqref{Lie:4} and \eqref{Lie:6} into \eqref{R-Hess-damping-comp} gives
\begin{align*}
    2g_{\rho_t}(\nabla_{\partial_t\rho_t}\grad L, h_t)
    \overset{\eqref{metric:1-3-5}}{=}&(D)+(F)+(H)
    \\=&\int \left(\partial_t\mathcal{G}(\rho_t)\cdot\grad L+2\mathcal{G}(\rho_t)\partial_t \grad L(\rho_t)\right)h_t
    \\=&\int \left(\partial_t\frac{\delta L}{\delta \rho_t}+\mathcal{G}(\rho_t)\partial_t \grad L(\rho_t)\right)h_t.
    \nonumber
    \end{align*}
Thus, the conclusion follows:
\begin{equation}\label{R-Hess-damping-result}
\mathcal{G}(\rho_t)\nabla_{\partial_t\rho_t}\grad L=\frac{1}{2}\partial_t\frac{\delta L}{\delta \rho_t}+\frac{1}{2}\mathcal{G}(\rho_t)\partial_t \grad L(\rho_t).
\end{equation}
With \eqref{2-covariant-d} and \eqref{R-Hess-damping-result} substituted into \eqref{R_acc:flow}, we arrive at:
\begin{align}
\begin{aligned}
    &\partial_t \Phi_t + \alpha_t \Phi_t - \frac{1}{2} \frac{\delta}{\delta \rho_t} \left( \int \partial_t\rho_t \mathcal{G}(\rho_t) \partial_t\rho_t \, dx \right)
\\&\quad\quad\quad\quad\quad+\frac{\beta_t}{2}\partial_t\frac{\delta L}{\delta \rho_t}+\frac{\beta_t}{2}\underbrace{\mathcal{G}(\rho_t)\partial_t\grad L(\rho_t)}_{(I)} + \gamma_t\frac{\delta L}{\delta \rho_t}=0.
\end{aligned}
\label{eq: system ode}
\end{align}
To compute $
(I)$, the following expansion is applied:
\begin{equation}
    \begin{aligned}\relax
 (I)&=   \mathcal{G}(\rho_t)\partial_t \left(\mathcal{G}(\rho_t)^{-1}\frac{\delta L}{\delta \rho_t}\right)=\mathcal{G}(\rho_t)\partial_t \left(\mathcal{G}(\rho_t)^{-1}\right)\frac{\delta L}{\delta \rho_t}+ \partial_t \frac{\delta L}{\delta \rho_t}
\\& \xlongequal{\partial_t\rho_t=\mathcal{G}(\rho_t)^{-1}\Phi_t}{} \mathcal{G}(\rho_t)\left[\frac{\partial \left(\mathcal{G}(\rho_t)^{-1}\right)}{\partial \rho_t}\cdot\mathcal{G}(\rho_t)^{-1}\Phi_t\right] \frac{\delta L}{\delta \rho_t}+ \partial_t\frac{\delta L}{\delta \rho_t}.
\end{aligned}
 \end{equation}   

We ultimately establish the conclusion by making transformation $\Psi_t=\Phi_t+\beta_t \frac{\delta L}{\delta \rho_t}$, and  employing 
$
    \left.\frac{\delta}{\delta \rho} \left( \int \partial_t\rho_t \mathcal{G}(\rho) \partial_t\rho_t \, dx \right)\right|_{\rho=\rho_t}=-\left.\frac{\delta}{\delta \rho} \left( \int \Phi_t \mathcal{G}(\rho)^{-1} \Phi_t \, dx \right)\right|_{\rho=\rho_t}.
$
This equality is proved in Section A.2 of \cite{wang2022accelerated}.
\end{proof}
\subsection{Analysis of ODE-flow}
In this subsection, we aim to prove the convergence of the  ODE trajectory \eqref{R_acc:flow} with $\alpha_t = {\alpha}/{t}$. We first provide a technical lemma that analyzes two quantities related to the metrics. 
\begin{lemma}\label{lemma: metric}
    Let $T_t = \log_{\rho_t}\rho^*$ denote the exponential map from $\rho_t$ to $\rho^*$. For the $H^s$ ($s\in\mathbb{Z}$) metric, Fisher-Rao  metric, and \W-2 metric, the following inequality holds:
    \begin{equation}\label{TLemma:1}
        g(\partial_t\rho_t , \nabla_{\partial_t\rho_t} T_t + \partial_t\rho_t)\ge0.
    \end{equation}
    For the $H^s$ ($s\in\mathbb{Z}$) metric and Fisher-Rao  metric, we further have: \begin{equation}\label{TLemma:2}
  g( \grad F(\rho_t), \nabla_{\partial_t\rho_t} T_t + \partial_t\rho_t)\ge g( \grad F(\rho_t),  \partial_t\rho_t)-||\grad F(\rho_t)||_g\cdot||\partial_t\rho_t||_g.
    \end{equation}
\end{lemma}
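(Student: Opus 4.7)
The plan is to verify the two inequalities individually for each admissible metric, since the three metrics admit very different explicit structures for the logarithm map $T_t$. I will handle the cases in increasing order of technicality: $H^s$ is flat, Fisher-Rao is captured by a sphere embedding, and Wasserstein-2 requires Otto calculus.

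For the $H^s$ ($s\in\mathbb{Z}$) metric, the metric tensor $\mathcal{G}$ is constant in $\rho$ (it is a fixed differential or pseudo-differential operator $\sum_{i}(-\Delta)^i$ or its inverse), so the underlying Levi-Civita connection has vanishing Christoffel symbols and the manifold is flat. Consequently $\exp_\rho(v)=\rho+v$, which gives $T_t=\rho^*-\rho_t$ and $\nabla_{\partial_t\rho_t}T_t=\partial_t T_t=-\partial_t\rho_t$. Plugging this into \eqref{TLemma:1} and \eqref{TLemma:2} makes both inequalities hold with equality, so no further work is needed in this case.

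For the Fisher-Rao metric, I will exploit the classical isometry $\rho\mapsto 2\sqrt{\rho}$ which identifies $(\mathcal{P}(\Omega),g^{FR})$ with a subset of the $L^2$-sphere of radius $2$, so that geodesics become great circles and $T_t$ has a closed-form expression in sine/cosine of the Fisher-Rao distance. Computing $\nabla_{\partial_t\rho_t}T_t$ via the spherical covariant derivative (the projection of the ambient derivative onto the tangent space) and applying the spherical Jacobi-field identity, one obtains the Rauch-type estimate $\|\nabla_{\partial_t\rho_t}T_t\|_g\le\|\partial_t\rho_t\|_g$; Cauchy-Schwarz applied to $g(\grad F(\rho_t),\nabla_{\partial_t\rho_t}T_t)$ then gives \eqref{TLemma:2}. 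For \eqref{TLemma:1}, the same spherical computation lets me expand $g(\partial_t\rho_t,\nabla_{\partial_t\rho_t}T_t)$ in coordinates and show that the positive sectional curvature produces a non-negative correction to the flat value $-\|\partial_t\rho_t\|_g^2$, yielding the claimed sign.

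For the Wasserstein-2 metric I use Otto calculus: tangent vectors at $\rho_t$ are identified with gradient fields $\nabla\phi$ through the continuity equation $\partial_t\rho_t=-\nabla\cdot(\rho_t\nabla\phi)$, and $T_t$ corresponds to $\nabla\varphi_t-\mathrm{Id}$, where $\varphi_t$ is the Brenier potential of the optimal map pushing $\rho_t$ to $\rho^*$. The desired inequality \eqref{TLemma:1} is then equivalent to the displacement semi-convexity of $\tfrac12 W_2^2(\cdot,\rho^*)$ along the curve $\rho_t$, which is exactly the non-negative Alexandrov curvature property of $(\mathcal{P}_2,W_2)$; I will derive it by computing the second variation of $\tfrac12 W_2^2(\rho_t,\rho^*)$ along $\rho_t$ and rearranging.

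The hard part will be the Wasserstein-2 case: interpreting $\nabla_{\partial_t\rho_t}T_t$ rigorously requires differentiating an implicitly defined Brenier potential, and the NNC second-variation estimate needs suitable regularity of $\rho_t$, $\rho^*$ and the optimal transport map. This is also the reason only \eqref{TLemma:1} is asserted for $W_2$: the pointwise Jacobi-field bound $\|\nabla_{\partial_t\rho_t}T_t\|_g\le\|\partial_t\rho_t\|_g$ that powered \eqref{TLemma:2} in the Fisher-Rao case need not be sharp in the Wasserstein setting, so the second inequality is not claimed there.
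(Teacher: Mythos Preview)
Your plan is sound and would work. The $H^s$ case is identical to the paper's argument. The interesting differences are in the other two cases.

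For the Fisher-Rao metric your route is genuinely different and arguably cleaner. The paper never invokes the sphere isometry abstractly; instead it writes out the closed-form geodesic, computes $\partial_t T_t$ term by term, converts $g(\partial_t\rho_t,\nabla_{\partial_t\rho_t}T_t)$ into $\int \partial_t T_t\,[\Phi_t]_{\rho_t}$ via the identity \eqref{2-covariant-d}, and then grinds through several Cauchy--Schwarz estimates until the trigonometric coefficients cancel. Your approach collapses all of that: under $\rho\mapsto 2\sqrt{\rho}$ the manifold is a sphere of radius $2$, the Fisher-Rao distance equals $2H_t$ with $H_t\in[0,\pi/2)$, and the standard Hessian formula for $\tfrac12 d^2(\cdot,\rho^*)$ on a sphere gives eigenvalues $1$ and $H_t\cot H_t\in(0,1]$ in this range. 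Since $-\nabla T_t$ is exactly this Hessian, you get the operator bound $\|\nabla_{\partial_t\rho_t}T_t\|_g\le\|\partial_t\rho_t\|_g$ and the sign $g(\partial_t\rho_t,\nabla_{\partial_t\rho_t}T_t)\ge-\|\partial_t\rho_t\|_g^2$ in one stroke, from which both \eqref{TLemma:1} and \eqref{TLemma:2} follow immediately. Just make explicit that $H_t<\pi/2$ is what keeps $\cot H_t>0$; this is where positivity of the densities enters.

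For the \W-2 metric your framing via nonnegative Alexandrov curvature is correct intuition, but to actually carry it out at the level of the paper you will end up doing essentially the paper's computation: write $T_t=-\nabla\cdot(\rho_t(P_t-\mathrm{id}))$ with $P_t=\nabla c_t$ the Brenier map, differentiate $P_t$ in $t$ via the identity $\partial_t P_t=-\nabla P_t\,u_t$ for a velocity $u_t$ satisfying $\nabla\cdot(\rho_t u_t)=\nabla\cdot(\rho_t\nabla\Phi_t)$, and then use integration by parts plus the positive semidefiniteness of $\nabla^2 c_t$ to conclude. So for \W-2 the two approaches converge; the paper's version is just the explicit second-variation calculation you allude to.
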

    \begin{proof}
      \textit{$H^s$ metrics.} The proof is straightforward, as the Riemannian manifold is flat. We have $T_t=\rho^*-\rho_t$, and $\nabla_{\partial_t\rho_t}T_t=\partial_tT_t=-\partial_t\rho_t$. This immediately gives the desired conclusion \eqref{TLemma:1} and \eqref{TLemma:2}.

      \textit{Fisher-Rao metric.} According to Proposition 10 in \cite{wang2022accelerated}, the geodesic connecting $\tau_0$ and $\tau_1$ for any $\tau_0,\tau_1\in \mathcal{M}$ is given by 
      \begin{equation}
       \tau_t(x)=\frac{1}{\sin^2H}  \left[\sin (Ht)\sqrt{\tau_1(x)}+\sin (H(1-t))\sqrt{\tau_0(x)}\right]^2, \quad 0\le t \le 1,
       \nonumber
      \end{equation}
      where $H=\cos^{-1}(\int \sqrt{\tau_0(x)\tau_1(x)}dx)\in[0,\frac{\pi}{2})$. This leads to the expression: \begin{equation}
          T_t=\frac{2H_t}{\sin(H_t)}\sqrt{\rho_t\rho^*}-\frac{2H_t\cos(H_t)}{\sin(H_t)}\rho_t,
      \end{equation}
      where $H_t=\cos^{-1}(\int \sqrt{\rho_t(x)\rho^*(x)}dx)\in[0,\frac{\pi}{2})$. We denote $[f]_{\rho_t} = f - \mathbb{E}_{\rho_t} [f]$. It yields:
\begin{equation}
    \begin{aligned}
        \partial_t H_t=&-\frac{1}{2\sin H_t}\int\sqrt{\frac{\rho^*}{\rho_t}}\partial_t\rho_t \; dx=-\frac{1}{2\sin H_t}\int\sqrt{{\rho^*}{\rho_t}}[\Phi_t]_{\rho_t} \; dx,
        \\\partial_t T_t =&
        -\frac{\sin H_t-H_t \cos H_t}{\sin^3 H_t}\left(\int\sqrt{{\rho^*}{\rho_t}}[\Phi_t]_{\rho_t}  dx\right)\left[\sqrt{\frac{\rho^*}{\rho_t}}\right]_{\rho_t}\rho_t
        \\&+\frac{H_t}{\sin H_t}\left(\sqrt{\frac{\rho^*}{\rho_t}}[\Phi_t]_{\rho_t}-\int\sqrt{{\rho^*}{\rho_t}}[\Phi_t]_{\rho_t}  dx\right)\rho_t-\frac{2H_t\cos H_t }{\sin H_t}[\Phi_t]_{\rho_t}\rho_t.
    \end{aligned}
\end{equation}
Thus for any $f\in T^*_{\rho_t}\mathcal{M}$, we have: 
\begin{equation}\label{fisher:ge}
  \begin{aligned}
       \int \partial_t T_t [f]_{\rho_t}
       =&-\frac{\sin H_t-H_t \cos H_t}{\sin^3 H_t} \left(\int\left[\sqrt{\frac{\rho^*}{\rho_t}}\right]_{\rho_t}[\Phi_t]_{\rho_t}\rho_t \; dx \right) \int\left[\sqrt{\frac{\rho^*}{\rho_t}}\right]_{\rho_t}[f]_{\rho_t}\rho_t \; dx \\ &+\frac{H_t}{\sin H_t}\int \sqrt{\rho^*\rho_t}[\Phi_t]_{\rho_t}[f]_{\rho_t} \; dx -\frac{2H_t\cos H_t }{\sin H_t} \int [\Phi_t]_{\rho_t}[f]_{\rho_t}\rho_t \; dx.  
  \end{aligned}
\end{equation}
The Cauchy-Schwarz inequality implies that:
\begin{align} \label{fisher: cauchy}
\begin{aligned}
    &\left(\int\left[\sqrt{\frac{\rho^*}{\rho_t}}\right]_{\rho_t}[\Phi_t]_{\rho_t}\rho_t  dx \right)\left( \int\left[\sqrt{\frac{\rho^*}{\rho_t}}\right]_{\rho_t}[f]_{\rho_t}\rho_t dx\right) \\ &  \leq \left(\int \left[\sqrt{\frac{\rho^*}{\rho_t}}\right]_{\rho_t}^2\rho_t dx \right)\left(\int[\Phi_t]_{\rho_t}^2\rho_t  dx \right)^{\frac{1}{2}}\left(\int[f]_{\rho_t}^2\rho_t  dx \right)^{\frac{1}{2}}  \\& =  \sin^2H_t \left(\int[\Phi_t]_{\rho_t}^2\rho_t  dx \right)^{\frac{1}{2}}\left(\int[f]_{\rho_t}^2\rho_t  dx \right)^{\frac{1}{2}}.
\end{aligned}
\end{align}
Since $\tan H_t \geq H_t$ for all $H_t \in [0, \frac{\pi}{2})$,  substituting  \eqref{fisher: cauchy} into \eqref{fisher:ge} yields:
\begin{equation}\label{fisher:geq}
  \begin{aligned}
       \int \partial_t T_t [f]_{\rho_t} dx
       =&-\frac{\sin H_t-H_t \cos H_t}{\sin H_t} \left(\int[\Phi_t]_{\rho_t}^2\rho_t  dx \right)^{\frac{1}{2}}\left(\int[f]_{\rho_t}^2\rho_t  dx \right)^{\frac{1}{2}} \\ &+\frac{H_t}{\sin H_t}\int \sqrt{\rho^*\rho_t}[\Phi_t]_{\rho_t}[f]_{\rho_t}  dx -\frac{2H_t\cos H_t }{\sin H_t} \int [\Phi_t]_{\rho_t}[f]_{\rho_t}\rho_t  dx.  
  \end{aligned}
\end{equation}
We now consider the expression in \eqref{TLemma:1}:
\begin{align*}
    &g(\partial_t\rho_t , \nabla_{\partial_t\rho_t} T_t + \partial_t\rho_t)
    \\=&\frac{d}{dt}g(\partial_t\rho_t , T_t )-g(\nabla_{\partial_t\rho_t}\partial_t\rho_t ,  T_t )+g(\partial_t\rho_t , \partial_t\rho_t)
    \\\overset{\eqref{2-covariant-d}}{=}&\int(\partial_t \Phi_t T_t+ [\Phi_t]_{\rho_t} \partial_tT_t)dx-\int \left(\partial_t\Phi_t-\frac{1}{2}\frac{\delta g(\partial_t\rho_t,\partial_t\rho_t)}{\delta \rho_t}\right)T_tdx+g(\partial_t\rho_t , \partial_t\rho_t)
\\\overset{\eqref{fisher:geq}}{\ge}&-\frac{\sin H_t-H_t \cos H_t}{\sin H_t}\int[\Phi_t]_{\rho_t}^2\rho_t dx
    +\frac{H_t}{\sin H_t}\int \sqrt{\rho^*\rho_t}[\Phi_t]_{\rho_t}^2 dx +\int [\Phi_t]_{\rho_t}^2\rho_t dx 
    \\&-\frac{2H_t\cos H_t }{\sin H_t}\int [\Phi_t]_{\rho_t}^2\rho_t dx-\frac{1}{2}\int[\Phi_t]_{\rho_t}^2 \left(\frac{2H_t}{\sin(H_t)}\sqrt{\rho_t\rho^*}-\frac{2H_t\cos(H_t)}{\sin(H_t)}\rho_t\right) dx
    \\=&0.
    \nonumber
\end{align*}
A similar approach is applied to handle \eqref{TLemma:2}, which gives
      \begin{equation}
      \begin{aligned}
        &g(\grad F(\rho_t), \nabla_{\partial_t\rho_t} T_t + \partial_t\rho_t)
        \\=&\frac{d}{dt}g(\grad F(\rho_t) , T_t )-g(\nabla_{\partial_t\rho_t}\grad F(\rho_t) ,  T_t )+g(\grad F(\rho_t), \partial_t\rho_t)
        \\\overset{\eqref{fisher:geq}}{\ge}&-\frac{\sin H_t-H_t \cos H_t}{\sin H_t}\left(\int[\Phi_t]_{\rho_t}^2\rho_t
        dx \right)^{\frac{1}{2}}\left(\int\left[\frac{\delta F}{\delta \rho_t}\right]_{\rho_t}^2\rho_t dx\right)^{\frac{1}{2}}
        \\&+\frac{H_t}{\sin H_t}\int \sqrt{\rho^*\rho_t}[\Phi_t]_{\rho_t}\left[\frac{\delta F}{\delta \rho_t}\right]_{\rho_t} dx
        -\frac{2H_t\cos H_t }{\sin H_t} \int [\Phi_t]_{\rho_t}\left[\frac{\delta F}{\delta \rho_t}\right]_{\rho_t}\rho_t dx
        \\&-\frac{1}{2}\int\left[\frac{\delta F}{\delta \rho_t}\right]_{\rho_t}[\Phi_t]_{\rho_t}\left(\frac{2H_t\sqrt{\rho_t\rho^*}}{\sin H_t}-\frac{2H_t\cos H_t}{\sin H_t}\rho_t\right) dx +\int [\Phi_t]_{\rho_t}\left[\frac{\delta F}{\delta \rho_t}\right]_{\rho_t}\rho_t  dx
        \\\ge&-\left(\int[\Phi_t]_{\rho_t}^2\rho_t 
        \right)^{\frac{1}{2}}\left(\int\left[\frac{\delta F}{\delta \rho_t} \right]_{\rho_t}^2\rho_t dx 
        \right)^{\frac{1}{2}}+\int [\Phi_t]_{\rho_t}\left[\frac{\delta F}{\delta \rho_t}\right]_{\rho_t}\rho_t dx,
        \nonumber
        \end{aligned}
      \end{equation}
      where the last inequality uses Cauchy-Schwarz inequality. Thus, the conclusion \eqref{TLemma:1} and \eqref{TLemma:2} holds for Fisher-Rao metric.
      
      \textit{\W-2 metric.} Suppose that $P_t$ is the optimal transport mapping from $\rho_t$ to $\rho^*$. Hence, we have ${P_t}_{\#}\rho_t=\rho^*$. By Brenier's Theorem  \cite{villani2009optimal}, there exists a strictly convex function $c_t : \mathbb{R}^d \to \mathbb{R}$ such that $P_t = \nabla c_t$, and consequently, $\nabla P_t = \nabla^2 c_t$ is symmetric. Then according to the continuity equation, the exponential map is expressed as $T_t=-\nabla\cdot(\rho_t(P_t-id)).$
      
      Next, we compute the time derivative of $P_t$. Noting that
      \begin{equation}
    0=\partial_t(P_t^{-1}\circ P_t)=\partial_t(P_t^{-1})\circ P_t+\nabla(P_t^{-1})\cdot \partial_t P_t,
      \end{equation}
      and $I_d=\nabla(P_t^{-1}\circ P_t)=\nabla (P_t^{-1})\cdot \nabla P_t$, we can deduce
      \begin{equation}\label{partialPt}
      \partial_t P_t(x)=-\nabla P_t(x)  u_t(x), 
      \end{equation}
      where $u_t=\partial_t(P_t^{-1})\circ P_t$. Given that $P_t$ is the optimal transport mapping, the distribution of $y_t={P_t}^{-1}(y_0)$ is $\rho_t$ for $y_0\sim \rho^*$. Its velocity is given by $\partial_t y_t = u_t(y_t)$. By the continuity equation, we have
      \begin{equation}\label{vPhi}
          \nabla\cdot(\rho_tu_t)=\nabla\cdot(\rho_t\nabla\Phi_t)=-\partial_t \rho_t.
      \end{equation}
      Based on the preceding discussion, we now proceed with the term
$g(\partial_t\rho_t , \nabla_{\partial_t\rho_t} T_t + \partial_t\rho_t)$ as follows:
\begin{equation}\label{w2:curv}
\begin{aligned}
 g(\partial_t\rho_t , \nabla_{\partial_t\rho_t} T_t + \partial_t\rho_t)
        \overset{\eqref{2-covariant-d}}{=}&\int\Phi_t \partial_tT_t dx+\int \frac{\delta g(\partial_t\rho_t,\partial_t\rho_t)}{\delta \rho_t}\frac{T_t}{2} dx +g(\partial_t\rho_t , \partial_t\rho_t)
\\\overset{(b),\eqref{partialPt}}{=}&\int \left\langle\nabla\left\langle\nabla\Phi_t,P_t-x\right\rangle,\nabla\Phi_t\right\rangle\rho_t dx-\left\langle\nabla\Phi_t,\nabla P_t u_t\right\rangle\rho_t dx
         \\&+\frac{1}{2}\left\langle\nabla ||\nabla\Phi_t||^2,P_t-x\right\rangle\rho_t dx+\int||\nabla\Phi||^2\rho_t dx 
\\=&\int\left\langle\nabla\Phi_t,\nabla P_t \nabla\Phi_t\right\rangle\rho_t dx-\int\left\langle\nabla\Phi_t,\nabla P_t u_t\right\rangle\rho_t dx,
        \end{aligned}
\end{equation}
where  $(a), (b)$ use integration by parts. The equations \eqref{partialPt} and \eqref{vPhi} imply that:
\begin{equation}
\int\left\langle\nabla\Phi_t-u_t,\nabla P_t u_t\right\rangle\rho_t  =-\int\left\langle\nabla\Phi_t-u_t,\partial_t \nabla c_t\right\rangle\rho_t =\int\nabla\cdot(\rho_t(\nabla\Phi_t-u_t))\partial_t c_t =0. 
\nonumber
\end{equation}
Substituting it into \eqref{w2:curv} and using the semi-definiteness of $\nabla P_t = \nabla^2 c_t$, we obtain the following inequality:
\begin{equation}
 g(\partial_t\rho_t , \nabla_{\partial_t\rho_t} T_t + \partial_t\rho_t)
=\int\left\langle\nabla\Phi_t-u_t,\nabla P_t (\nabla\Phi_t-u_t)\right\rangle\rho_t dx\ge0.
\nonumber
\end{equation}
 Thus, the proof is complete.
\end{proof}
Before presenting the convergence theorem, we define $w_t = \gamma_t -\dot{\beta}_t - \beta_t/t$ and $\delta_t = t^2 \left(\gamma_t +(\alpha-3)\beta_t/(2t) -\dot{\beta_t}\right)$.
\begin{theorem}
    Assume that the target function $L$ is geodesically convex towards $\rho$ on the manifold $\mathcal{M}$, and $L$ attains its minimum at $\rho^*$. Let $\rho : [t_0, + \infty) \to \mathcal{M}\,(t_0>0)$ be a solution trajectory of \eqref{R_acc:flow}. Suppose that $\alpha_t={\alpha}/{t}$ with $\alpha>1$, and $\gamma_t>0$.
    Then if $w_t>0$ and $\dot{\delta}_t \leq 2tw_t(\alpha-1)$ hold for the $H^s$($s\in\mathbb{Z}$) metric and the Fisher-Rao metric, or $\beta_t\equiv0$ holds for the \W-2 metric, we have 
    \begin{align}\label{thm: converge}
        L(\rho_t) - L(\rho^*) = \mathcal{O}\left(\frac{1}{t^2 w_t} \right) \quad \text{as}  \quad t \to \infty.
    \end{align}
\end{theorem}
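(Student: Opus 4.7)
The plan is to prove the $\mathcal{O}(1/(t^2 w_t))$ rate via a Lyapunov argument, extending the classical analysis of the Euclidean high-resolution accelerated ODE with Hessian damping \cite{Attouch2022} to the Riemannian setting using the exponential map to $\rho^*$ and Lemma \ref{lemma: metric}.

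Guided by the first-order system (\ref{ode : accelerate})--(\ref{ode : accelerate_2}) and its Euclidean counterpart, I would introduce a Lyapunov function of the form
$$\mathcal{E}(t) = \delta_t\bigl(L(\rho_t) - L(\rho^*)\bigr) + \tfrac{1}{2}\bigl\|(\alpha-1)T_t - t\bigl(\partial_t\rho_t + \beta_t\grad L(\rho_t)\bigr)\bigr\|_g^2,$$
where $T_t = \log_{\rho_t}\rho^*$ is the exponential map from $\rho_t$ to the minimizer. The combination $\partial_t\rho_t + \beta_t\grad L(\rho_t) = \mathcal{G}(\rho_t)^{-1}\Psi_t$ is exactly the ``momentum'' state of Proposition \ref{prop: Riemannian system}, and the prefactor $\delta_t$ is tuned so that the algebraic identity $\delta_t = t^2 w_t + (\alpha-1)t\beta_t/2$ (direct from the definitions of $\delta_t$ and $w_t$) converts a bound on $\mathcal{E}$ into the advertised rate for $L(\rho_t) - L(\rho^*)$, since $\delta_t \geq t^2 w_t$ whenever $\beta_t \geq 0$ and $\alpha > 1$.

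The next step is to differentiate $\mathcal{E}(t)$ along the trajectory using $\frac{d}{dt}\|V_t\|_g^2 = 2 g(V_t, \nabla_{\partial_t\rho_t} V_t)$ for tangent fields along $\rho_t$, and to substitute the ARG flow \eqref{R_acc:flow} to eliminate $\nabla_{\partial_t\rho_t}\partial_t\rho_t$. The resulting $\dot{\mathcal{E}}(t)$ splits into three categories of terms: (i) linear-in-gradient terms of the form $g(\grad L(\rho_t), T_t)$, which are controlled by geodesic convexity via $-g(\grad L(\rho_t), T_t) \geq L(\rho_t) - L(\rho^*)$; (ii) velocity-squared terms $\|\partial_t\rho_t\|_g^2$ and cross products, which combine with the $\alpha_t = \alpha/t$ damping contribution; and (iii) metric-dependent curvature terms involving $\nabla_{\partial_t\rho_t} T_t$ and, when $\beta_t \neq 0$, the Hessian-damping term $\nabla_{\partial_t\rho_t} \grad L(\rho_t)$. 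These last terms are precisely where Lemma \ref{lemma: metric} enters: inequality \eqref{TLemma:1} absorbs the curvature/velocity cross term for all three metric families, while \eqref{TLemma:2} handles the Hessian-damping contribution for $H^s$ and Fisher--Rao. The restriction $\beta_t \equiv 0$ in the Wasserstein-2 case is exactly the hypothesis needed to avoid invoking \eqref{TLemma:2}, which Lemma \ref{lemma: metric} does not provide on $\mathcal{P}(\Omega)$.

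Collecting all contributions, the expected estimate is
$$\dot{\mathcal{E}}(t) \leq \bigl(\dot{\delta}_t - 2tw_t(\alpha-1)\bigr)\bigl(L(\rho_t) - L(\rho^*)\bigr) + (\text{non-positive remainder}),$$
so the hypothesis $\dot{\delta}_t \leq 2tw_t(\alpha-1)$ (respectively $\beta_t \equiv 0$) yields $\dot{\mathcal{E}}(t) \leq 0$, whence $\delta_t(L(\rho_t) - L(\rho^*)) \leq \mathcal{E}(t_0)$ and the rate \eqref{thm: converge} follows from $\delta_t \geq t^2 w_t$. The main obstacle will be the coefficient bookkeeping: verifying that the prefactor on $L(\rho_t) - L(\rho^*)$ after differentiation is \emph{exactly} $\dot{\delta}_t - 2tw_t(\alpha-1)$ requires careful tracking of the cross term generated by $\frac{d}{dt}[t(\partial_t\rho_t + \beta_t \grad L(\rho_t))]$ and of the Cauchy--Schwarz remainder in \eqref{TLemma:2}, and ensuring that the latter is genuinely absorbed by the $\|\partial_t\rho_t\|_g^2$ contribution rather than leaving an uncontrolled positive term.
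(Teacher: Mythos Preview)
Your plan is the same as the paper's: build a Lyapunov function combining $\delta_t(L(\rho_t)-L(\rho^*))$ with a squared ``mixed'' tangent vector involving $T_t=\log_{\rho_t}\rho^*$ and the momentum $\partial_t\rho_t+\beta_t\grad L(\rho_t)$, differentiate along the flow, and close using geodesic convexity together with Lemma~\ref{lemma: metric}. That is exactly the paper's argument.

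The only substantive discrepancy is in the constants you chose, and it is precisely the ``coefficient bookkeeping'' you flag. The paper's Lyapunov function is
\[
E_t=\delta_t\bigl(L(\rho_t)-L(\rho^*)\bigr)+\tfrac12\,g(v_t,v_t)+\tfrac{\alpha-1}{2}\,g(T_t,T_t),\qquad v_t=-(\alpha-1)T_t+2t\bigl(\partial_t\rho_t+\beta_t\grad L(\rho_t)\bigr),
\]
i.e.\ it carries a factor $2t$ (not $t$) on the momentum and an additional $\tfrac{\alpha-1}{2}\|T_t\|_g^2$ term. The factor $2t$ is what makes the cross term $g(T_t,\grad L)$ appear with coefficient $2tw_t(\alpha-1)$ after differentiating, so that geodesic convexity produces exactly $\dot{\delta}_t-2tw_t(\alpha-1)$ in front of $L(\rho_t)-L(\rho^*)$ and the hypothesis closes the estimate. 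With your coefficient $t$ the prefactor would be $\dot{\delta}_t-tw_t(\alpha-1)$, which the stated assumption does not make nonpositive. The extra distance term is used in the paper's bookkeeping to absorb a leftover $g(T_t,\nabla_{\partial_t\rho_t}T_t)$-type contribution (their term ``(B)''). Once you adjust these two constants your outline goes through verbatim; no new idea is needed.
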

\begin{proof}
For notational convenience, we use a dot over a variable to denote its derivative with respect to time. Consider the following Lyapunov function
\begin{align}\label{eqp : Lyapunov}
    E_t = \delta_t \left(L(\rho_t) - L(\rho^*) \right) + \frac{1}{2} g(v_t,v_t) + \frac{\alpha-1}{2}g(\log_{\rho_t} (\rho^*), \log_{\rho_t} (\rho^*)), 
\end{align}
where $v_t = -(\alpha-1) \log_{\rho_t}(\rho^*) + 2t (\dot{\rho}_t + \beta_t \operatorname{grad} L(\rho_t))$. Since $\rho_t$ is the trajectory of the ODE \eqref{R_acc:flow}, we can calculate the derivative of $v_t$ as
\begin{align}\label{eqp : derivative v}
    \begin{aligned}
        \nabla_{\dot{\rho}_t} v_t = (\alpha - 1)(-\nabla_{\dot{\rho}_t} \log_{\rho_t}(\rho^*) - \dot{\rho}_t) -(\alpha-1) \dot{\rho}_t - 2t w_t \grad L(\rho_t).
    \end{aligned}
\end{align}
It gives
\begin{align*}
    \frac{d}{dt}E_t =& \underbrace{\dot{\delta}_t (L(\rho_t)- L(\rho^*)) +2tw_t(\alpha-1) g( \log_{\rho_t}(\rho^*), \grad L(\rho_t))}_{(A)} \\
    & - 4t^2\beta_t\delta_tg(\grad L(\rho_t),\grad L(\rho_t)) -\underbrace{ (\alpha-1)^2 g(\log_{\rho_t}(\rho^*),-\nabla_{\dot{\rho}_t}\log_{\rho_t}(\rho^*) - \dot{\rho}_t )}_{(B)}   \\
    & -2t(\alpha-1)g(\dot{\rho}_t,\dot{\rho}_t)^2 + \underbrace{2t(\alpha-1)g( \dot{\rho}_t + \beta_t \grad L(\rho_t),  -\nabla_{\dot{\rho}_t}\log_{\rho_t}(\rho^*) - \dot{\rho}_t )}_{(C)}.
\end{align*}
Since the target function $L$ is geodesically convex, we have for part (A):
\begin{align*}
    (A) &\leq \left(\dot{\delta}_t - 2tw_t(\alpha-1)\right) (L(\rho_t)- L(\rho^*)) \leq 0.
\end{align*}
For (B), we have the following equation
\begin{align*}
    (B) = -\frac{(\alpha-1)^2}{2} \frac{d}{dt}  \operatorname{dist}(\rho_t,\rho^*)^2 +\frac{(\alpha-1)^2}{2} \frac{d}{dt}  \operatorname{dist}(\rho_t,\rho^*)^2  = 0.
\end{align*}
One more term occurs different from the Euclidean case is (C). By Lemma \ref{lemma: metric}, for $H^s$, Fisher-Rao and \W-2 metrics ($\beta_t=0$), it yields 
\begin{align*}
    \frac{d}{dt}E_t \leq & - 4t^2\beta_t\delta_t\|\grad L(\rho_t)\|_g^2 -2t(\alpha-1)\|\dot{\rho}_t\|_g^2  + 4t(\alpha-1) \beta_t\|\grad L(\rho_t)\|_g \|\dot{\rho}_t\|_g \\
    = & -2t(\alpha-1) (\|\dot{\rho}_t\|_g -\beta_t^2 \|\grad L(\rho_t)\|_g )^2 \\ &- 2t\beta_t(2t\delta_ t  -\beta_t(\alpha-1)) \|\grad L(\rho_t)\|_g^2  \leq 0.
\end{align*}
The convergence rate \eqref{thm: converge} follows from the monotonic decreasing property of $E_t$.
\end{proof}


\section{Numerical Experiments}\label{sec: experiments}
In this section, we give some numerical examples to show how machine learning problems can be fitted in form \eqref{problem: form}. Our method exhibits superior numerical performance on these examples.

\subsection{The Burgers' Equation}\label{sec: Burgers}
This subsection addresses the Burgers' equation, which is known for its challenges associated with shock waves and discontinuities. The equation, supplemented with boundary data $h(x)$, is formalized as follows:
\begin{align}
    \begin{aligned}
        &u_t + u u_x - \frac{0.01}{\pi} u_{xx} = 0, \quad x \in [-1, 1], \quad t\in [0, 1],\\
        &u(0,x) = h(x), \quad u(t,-1)=u(t,1) = 0.
    \end{aligned}
\end{align}
Let $\Omega = [-1,1]\times [0,1]$ and define $\partial \Omega_p = \{-1, 1\} \times [0,1] \cup [-1, 1] \times \{0\}$. A neural network $u_\theta$ is employed to approximate the solution with six hidden layers with $(20,50,80,80,50,20)$ neurons. The associated PDE and boundary loss for $u_\theta$ are:
\begin{align}
    L(u_\theta) = \|(u_\theta)_t + u_\theta (u_\theta)_x - \frac{0.01}{\pi} (u_\theta)_{xx} \|^2_{L^2(\Omega)} + \lambda \|u_\theta - g \|^2_{L^2(\partial \Omega)},
    \label{equation: total loss burgers}
\end{align}
where the function $g(x,t) = h(x)$ for $(x, t) \in [-1, 1] \times \{0\}$ and $g(x,t) = 0$ for $(x, t) \in \{-1, 1\} \times [0,1]$ represents the initial and boundary conditions. Taking $u_\theta$ on the  $L^2$ space, the training problem \eqref{equation: total loss burgers} can be fitted into \eqref{problem: form}. 

In our investigation, we evaluate the efficacy of the ANGD method, comparing against the stochastic gradient descent (SGD) algorithm, Adam \cite{kingma2014Adam} and natural gradient method without acceleration (NGD). The most important difference between the ANGD method and the NGD method is whether or not acceleration is considered on the manifold. We employ a systematic grid search to identify hyper-parameters for several algorithms. For Adam, we vary the initial learning rate among $\{0.001, 0.005, 0.01\}$, the parameters for the momentum terms $\beta_1 \in \{0.9, 0.99\}$ and $\beta_2 \in \{0.99, 0.999\}$, and the weight decay from the set $\{0, 1\text{e-4}, 5\text{e-5}\}$. Similarly, for SGD, ANGD and NGD, the optimal configurations are determined by grid searching across the same ranges for the initial learning rate and weight decay. The ODE parameters $\alpha_k$ and $\beta_k$ are initially set to the optimal values chosen from $\{0.01$, $0.05$, $0.1$, $0.15\}$ and subsequently decay linearly.

We examine two distinct boundary conditions  $h(x) = \sin(\pi x)$ and $h(x) = 1-\cos(2\pi x)$, each presenting varying levels of training difficulty. The efficacy of ANGD is demonstrated in Figures \ref{fig: burgers1} and \ref{fig: burgers2}. We consider the training loss and the testing loss versus iterations. ANGD demonstrates substantial convergence improvements over NGD, while surpassing both Adam and SGD. The lowest loss is also attained by the ANGD method.  As a natural gradient method, ANGD consistently maintains a significantly lower testing loss compared to Adam and SGD after acceleration. 
\begin{figure}[htbp]
    \centering
    \includegraphics[width=0.84\textwidth]{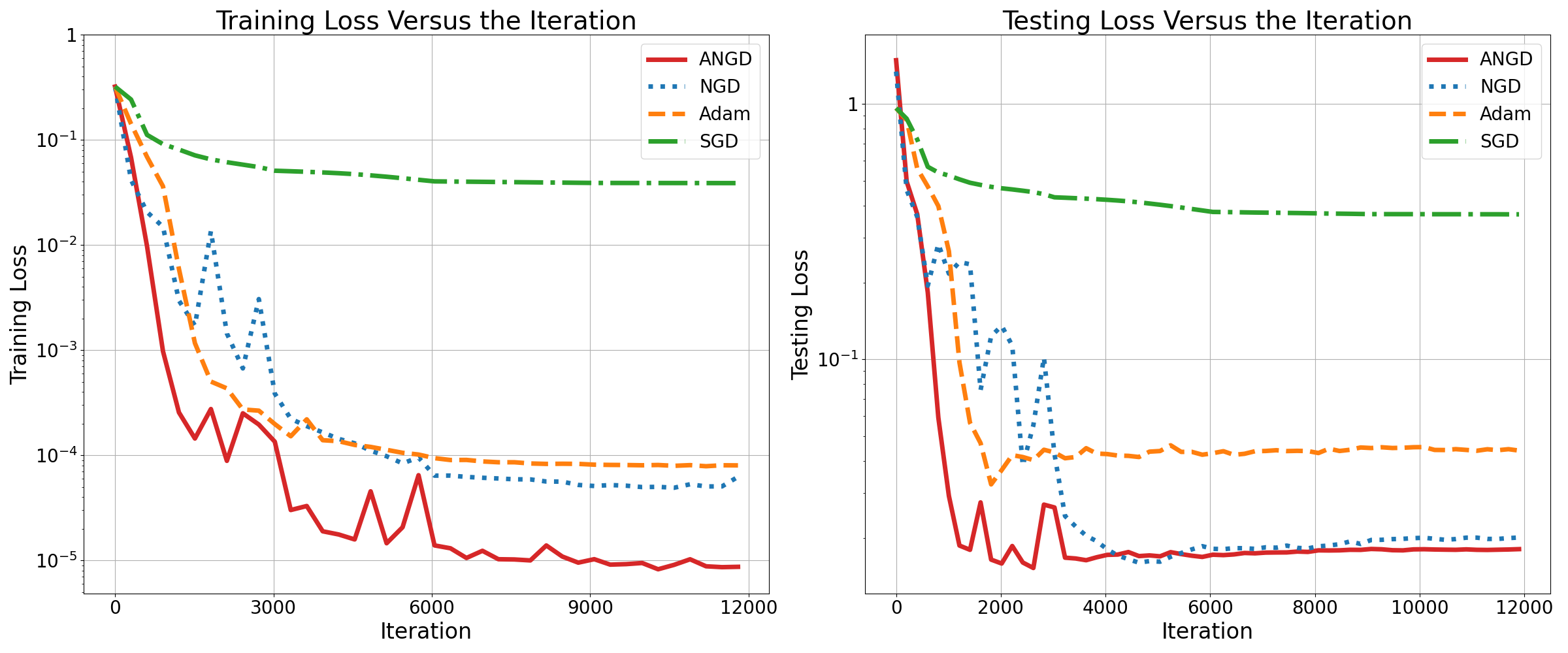}
    \caption{Numerical results for boundary condition $h(x) = \sin(\pi x)$}
    \label{fig: burgers1}
\end{figure}
\begin{figure}[htbp]
    \centering
    \includegraphics[width=0.84\textwidth]{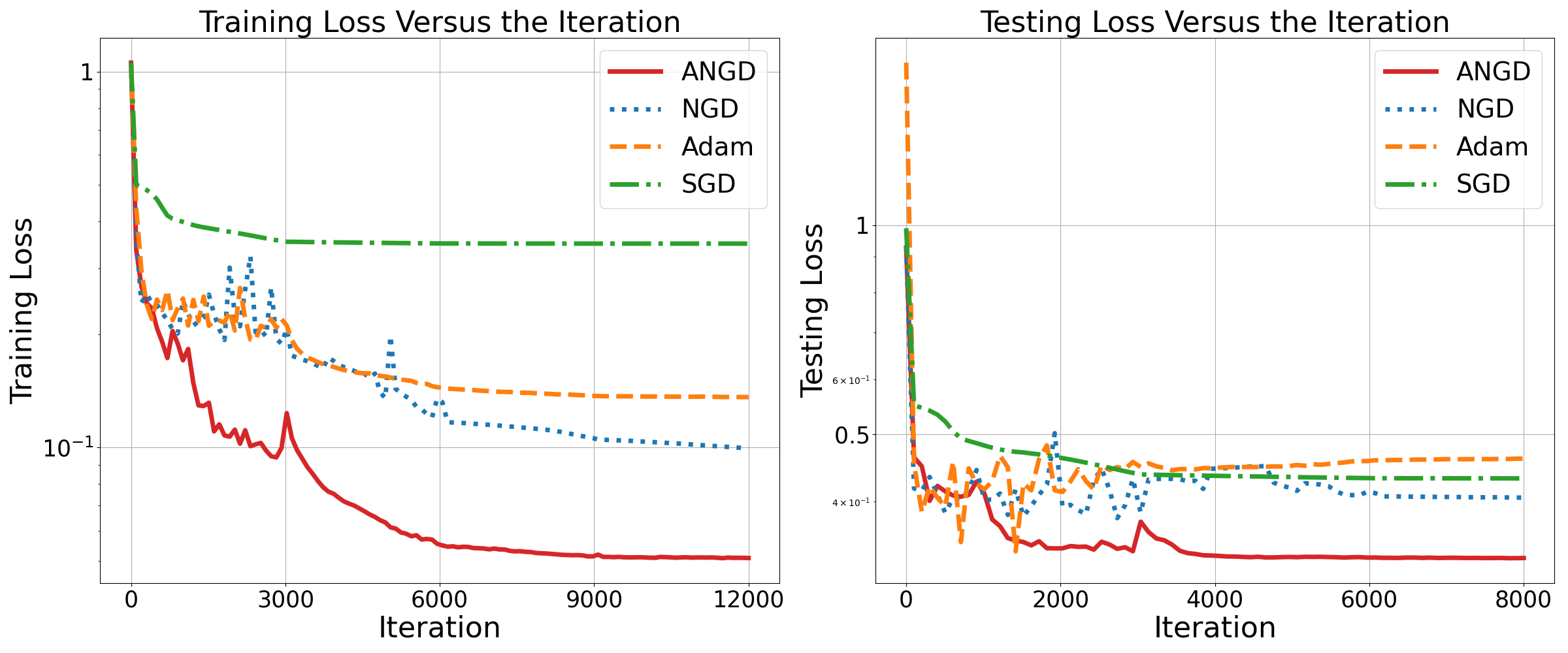}
    \caption{Numerical results for boundary condition $h(x) = 1 - \cos(2\pi x)$}
    \label{fig: burgers2}
\end{figure}

\subsection{The Euler Equations}
In this subsection, we solve the following conservative hyperbolic PDE:
\begin{align}
    \frac{\partial U}{\partial t}  + \nabla \cdot F = 0.
\end{align}
A notable example is the Euler equations, which represent a complex fluid dynamics problem frequently involving discontinuities. For the one-dimensional case of the Euler equations, the vectors $U$ and $F$ are defined as $U=(\rho,\rho u,E)\in\mathbb{R}^3, F=(\rho u,\rho u^2 + p,u(E+p))\in\mathbb{R}^3$. 
In the context of an ideal gas, $\rho$ symbolizes the density, $u$ represents the velocity, $p$ denotes the pressure, and $E= \frac{1}{2}\rho u^2 + \frac{p}{0.4}$ is the total energy. We employ a neural network $g_\theta = (\rho_\theta, u_\theta, p_\theta)$ with input $(x, t)$ designed to simultaneously approximate $\rho$, $u$, and $p$. Through this parametrization, we can get the value of vectors $U$ and $F$ as $U_\theta$ and $F_\theta$. The initial condition is the same as the Sod problem, which has been extensively studied. It is a 1D Riemann problem with the initial constant states in a tube with unit length formulated as
\begin{equation*}
    g(x,0) = (\rho, u, p) = 
    \begin{cases} 
    (1, -2, 0.4) & \text{if } 0 \leq x \leq 0.5, \\
    (1, 2, 0.4) & \text{if } 0.5 < x \leq 1.
    \end{cases}
\end{equation*}
We test the performance of the network at $t=0.2s$. To quantify the performance of our model, the loss function on the area $\Omega = [0, 1] \times [0, 0.2]$ is given as 
\begin{align}\label{eq: euler loss}
    L(\rho_\theta, u_\theta, p_\theta) = \| (U_\theta)_t + \nabla \cdot F_\theta\|^2_{L^2(\Omega)} + \lambda \|g_\theta - g(x,0)\|^2.
\end{align}
Here we consider the manifold  $L^2(\Omega)^{\otimes 3}$,  allowing \eqref{eq: euler loss} to be fitted in the form of \eqref{problem: form}. We evaluate the efficacy of ANGD method using $L^2$ metric, comparing its performance against SGD, Adam, and non-accelerated NGD. The hyper-parameters are set as described in Section \ref{sec: Burgers}. Figure \ref{fig: euler} illustrates the evolution of training and testing loss with respect to iterations. The ANGD method demonstrates a faster convergence rate in terms of training loss compared with the conventional method and the non-accelerated NGD, highlighting its efficiency in optimizing PINNs. Moreover, the ANGD method achieves a substantially lower testing loss, indicating generalization and improved alignment between the network’s predictions and the ground truth.
\begin{figure}[htbp]
\centering
\includegraphics[width=0.84\textwidth]{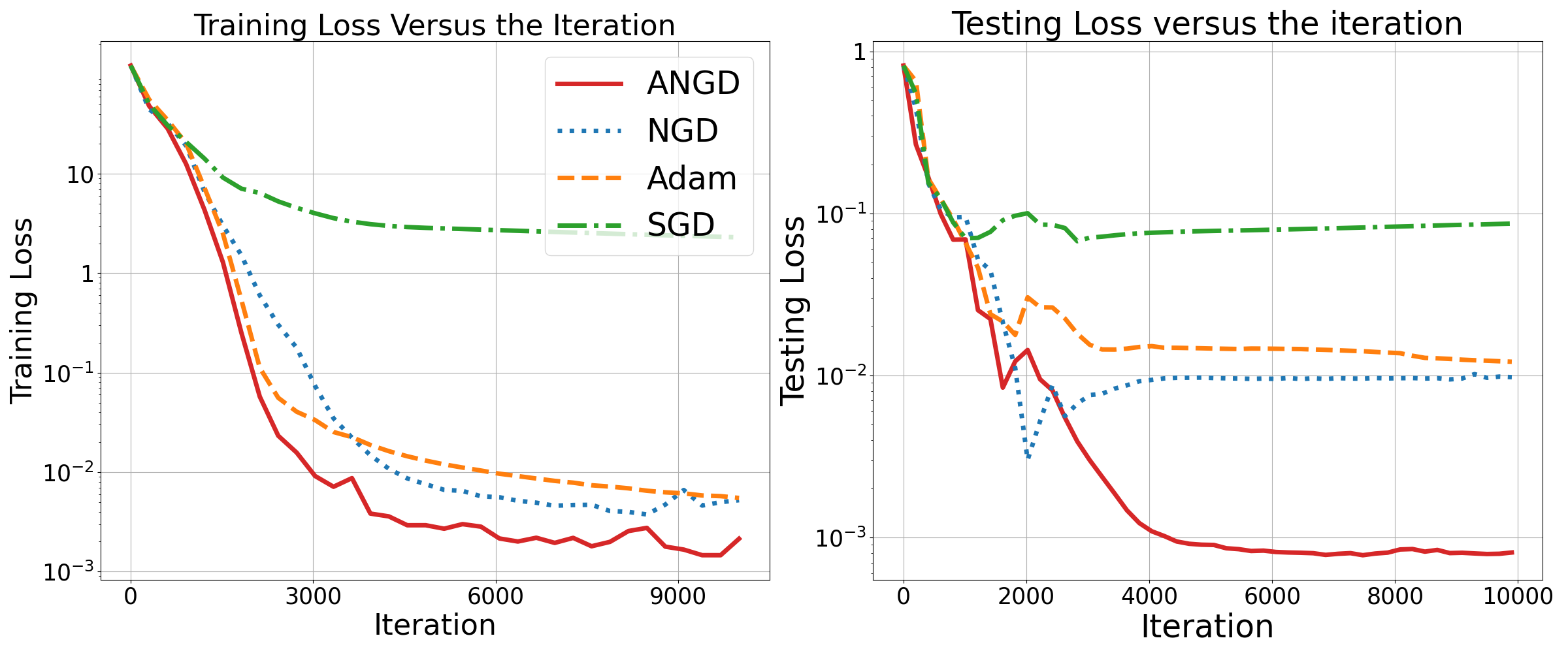}
    \caption{Numerical results for solving the Euler equations}
    \label{fig: euler}
\end{figure}
\subsection{Many-body quantum problem}\label{sec: experiments-vmc}
We consider a many-body  quantum system with $N$ electrons $x=\{x_1,...,x_N\}\in\mathbb{R}^{3N}$. The wavefunction $\psi_\theta : x \to \mathbb{R}$ describing the 
quantum state of the system is typically parameterized using neural networks, such as Ferminet \cite{pfau2020ab}. The goal is to solve for the ground state energy  and wavefunction, which is  formulated as a variational problem:
\begin{equation}\label{VMC}
\min_{\theta}\frac{\int _{x\in \mathbb{R}^{3N}}\psi_\theta(x)(\mathcal{H}\psi_\theta)(x)dx}{\int _{x\in \mathbb{R}^{3N}}\psi_\theta(x)^2dx}=\int_{x\in \mathbb{R}^{3N}} \sqrt{\rho_\theta}(x) (\mathcal{H}\sqrt{\rho_\theta})(x)dx\overset{\triangle}{=}L(\rho_\theta),
\end{equation}
where $\mathcal{H}$ is a  Hamiltonian operator, and $\rho_\theta:=\frac{\psi_\theta^2}{\int_{x\in \mathbb{R}^{3N}}\psi_\theta^2dx}$ is a probability density.  From this, we derive   $\frac{\delta L}{\delta \rho_\theta}=\frac{\mathcal{H}\sqrt{\rho_\theta}}{\sqrt{\rho_\theta}}=\frac{\mathcal{H}\psi_\theta}{\psi_\theta}$ and  $\partial_\theta \log \rho_\theta(x)=2(\partial_\theta\psi_\theta(x)-\mathbb{E}_{\rho_\theta}[\partial_\theta\psi_\theta])$.

Variational Monte Carlo (VMC) methods utilize Markov Chain Monte Carlo (MCMC) sampling to estimate expectations with an unnormalized probability distribution. Using this approach,  we conduct numerical experiments on a small atom (Be), and three molecules ($\text{Li}_2,\text{H}_{10},\text{N}_2$), to verify the acceleration effects of our  proposed ANGD algorithms on the Fisher-Rao metric using projected momentum discretization outlined in  \eqref{eq: solution of d_3}, and the \W-2 metric with  KFAC discretization. Notably, the non-accelerated NGD-Fisher-Rao algorithm corresponds to the SPRING algorithm \cite{Goldshlager2024spring}, while the non-accelerated NGD-\W-2 algorithm is essentially the WQMC algorithm \cite{neklyudov2023wasserstein}, differing primarily in numerical stability techniques.

\begin{figure}[t!]
    \centering
\includegraphics[width=0.84\textwidth,height=0.6\linewidth]{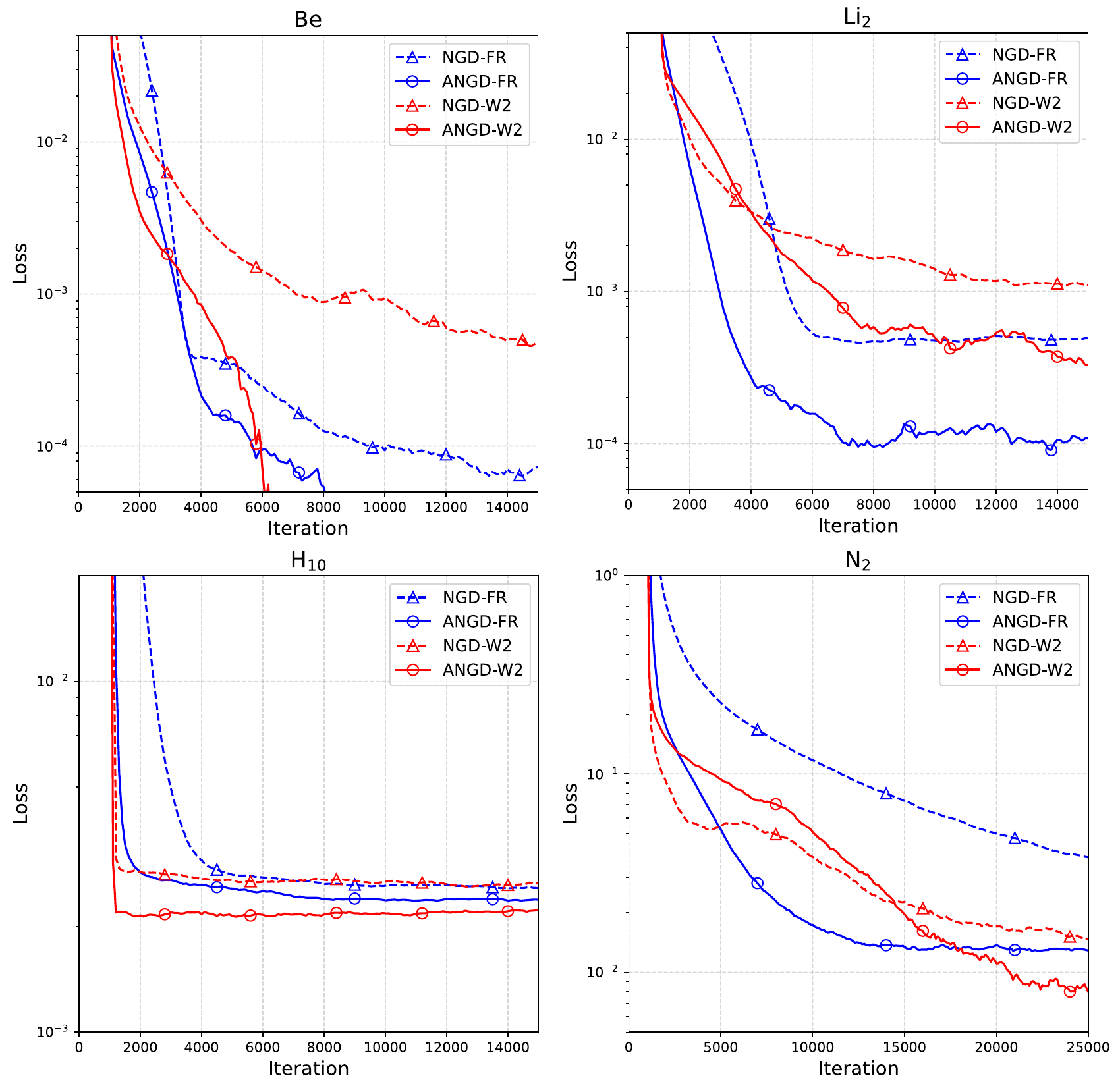}
    \caption{Numerical results of  VMC  on the molecules $\text{Be},\text{Li}_2,\text{H}_{10},\text{N}_2$. We use ``FR" and  ``W2" to denote the Fisher-Rao and \W-2 metrics, respectively, for brevity.}
    \label{fig:VMC}
\end{figure}

The experimental setup is as follows. The sample size $n$ is set to 2000 for the Fisher-Rao metric and 4000 for the \W-2 metric. 
The initial learning rate $h_0$ is searched
in $\{0.001$, $0.005$, $0.01$, $0.05$, $0.1\}$  for the baseline NGD algorithms, and in $\{\sqrt{0.001}$, $\sqrt{0.005}$, $\sqrt{0.01}$, $\sqrt{0.02}$, $\sqrt{0.05}\}$ for the ANGD algorithms. Following the approach in \cite{Goldshlager2024spring}, all algorithms employ a linear decay schedule $h_k=\frac{h_0}{1+\epsilon k}$ with  $\epsilon=5\text{e-5}$ for ANGD and $\epsilon=1\text{e-4}$ for NGD (doubled due to the square root). We also impose linear decay on  $\alpha_k, \beta_k$ with  $\alpha_0\in\{0.1/h_0,$ $0.2/h_0,$ $0.5/h_0\}$, $\beta_0\in$ $\{0.0, 0.01,0.05,0.1,0.15\}$ for ANGD methods, and set $\beta_k\equiv 0$ for ANGD-\W-2. Other hyper-parameters including   clipping, sampling steps, and the Ferminet architecture follow \cite{Goldshlager2024spring}.

 

Comparisons of the performance between the ANGD and NGD algorithms on the four benchmark particles are shown in Figure \ref{fig:VMC}. We normalize the loss (energy) them by subtracting the  physical lower bound (reported in Hartrees to four decimal places). For both the Fisher-Rao and \W-2 metrics, the ANGD methods demonstrate significantly faster convergence rates and attain lower final losses than the NGD methods. Remarkably, even the worst-performing ANGD variant outperforms the best NGD variant in terms of final loss, highlighting the significant benefits of incorporating acceleration into the optimization process.

\section{Conclusion}
In this paper, we introduce a novel ANGD framework for solving parametrized manifold optimization problems. An ARG flow is designed to characterize accelerated optimization on a manifold, incorporating Hessian-driven damping. An equivalent system of first-order ODEs for several metrics is proposed. We develop a discretization scheme to project the ODE flow onto the parameter space, leading to an efficient solution of the accelerated direction. The convergence analysis of ARG flow under convexity assumptions is also established. Numerical experiments show that ANGD accelerates the optimization process compared to other methods.

\bibliographystyle{plain}
\bibliography{references}

\begin{thebibliography}{10}

\bibitem{alimisis2020continuous}
Foivos Alimisis, Antonio Orvieto, Gary Becigneul, and Aurelien Lucchi.
\newblock {A Continuous-time Perspective for Modeling Acceleration in Riemannian Optimization}.
\newblock In {\em Proceedings of the 23rd International Conference on Artificial Intelligence and Statistics}, volume 108 of {\em Proceedings of Machine Learning Research}, pages 1297--1307. PMLR, 26--28 Aug 2020.

\bibitem{Amari1998Natural}
Shun-ichi Amari.
\newblock Natural gradient works efficiently in learning.
\newblock {\em Neural Computation}, 10(2):251--276, 1998.

\bibitem{arbel2019kernelized}
Michael Arbel, Arthur Gretton, Wuchen Li, and Guido Mont{\'u}far.
\newblock {Kernelized Wasserstein natural gradient}.
\newblock In {\em International Conference on Learning Representations}, 2020.

\bibitem{attouch2020fastconvexoptimizationinertial}
Hedy Attouch, Aïcha Balhag, Zaki Chbani, and Hassan Riahi.
\newblock Fast convex optimization via inertial dynamics combining viscous and {H}essian-driven damping with time rescaling.
\newblock {\em Evolution Equations and Control Theory}, 11(2):487--514, 2022.

\bibitem{Attouch2022}
Hedy Attouch, Zaki Chbani, Jalal Fadili, and Hassan Riahi.
\newblock First-order optimization algorithms via inertial systems with {H}essian driven damping.
\newblock {\em Mathematical Programming}, 193(1):113--155, 2022.

\bibitem{pmlr-v206-bahamou23a}
Achraf Bahamou, Donald Goldfarb, and Yi~Ren.
\newblock {A Mini-Block Fisher Method for Deep Neural Networks}.
\newblock In {\em Proceedings of The 26th International Conference on Artificial Intelligence and Statistics}, volume 206 of {\em Proceedings of Machine Learning Research}, pages 9191--9220, 25--27 Apr 2023.

\bibitem{riemannianhilbert}
Leonardo Biliotti and Francesco Mercuri.
\newblock Riemannian {H}ilbert {M}anifolds.
\newblock In {\em Hermitian--Grassmannian Submanifolds}, pages 261--271, Singapore, 2017. Springer Singapore.

\bibitem{chen2024empowering}
Ao~Chen and Markus Heyl.
\newblock Empowering deep neural quantum states through efficient optimization.
\newblock {\em Nature Physics}, 20(9):1476--1481, 2024.

\bibitem{Goldshlager2024spring}
Gil Goldshlager, Nilin Abrahamsen, and Lin Lin.
\newblock {A Kaczmarz-inspired approach to accelerate the optimization of neural network wavefunctions}.
\newblock {\em Journal of Computational Physics}, 516, 8 2024.

\bibitem{pmlr-v48-grosse16}
Roger Grosse and James Martens.
\newblock {A Kronecker-factored approximate Fisher matrix for convolution layers}.
\newblock In {\em Proceedings of The 33rd International Conference on Machine Learning}, volume~48 of {\em Proceedings of Machine Learning Research}, pages 573--582. PMLR, 20--22 Jun 2016.

\bibitem{He2022QuasiNG}
Xiaoyu He, Zibin Zheng, Yuren Zhou, and Chuan Chen.
\newblock {QNG: A Quasi-Natural Gradient Method for Large-Scale Statistical Learning}.
\newblock {\em SIAM Journal on Optimization}, 32(1):228--255, 2022.

\bibitem{JIN2025parameterized}
Yijie Jin, Shu Liu, Hao Wu, Xiaojing Ye, and Haomin Zhou.
\newblock Parameterized {W}asserstein gradient flow.
\newblock {\em Journal of Computational Physics}, 524:113660, 2025.

\bibitem{kingma2014Adam}
Diederik~P Kingma.
\newblock Adam: A method for stochastic optimization.
\newblock {\em arXiv preprint arXiv:1412.6980}, 2014.

\bibitem{mA2021Is}
Yi-An Ma, Niladri~S. Chatterji, Xiang Cheng, Nicolas Flammarion, Peter~L. Bartlett, and I~Jordan, Michael.
\newblock {Is there an analog of Nesterov acceleration for gradient-based MCMC?}
\newblock {\em Bernoulli}, 27(3):1942–1992, aug 2021.

\bibitem{Martens2020Newinsights}
James Martens.
\newblock New {I}nsights and {P}erspectives on the {N}atural {G}radient {M}ethod.
\newblock {\em Journal of Machine Learning Research}, 21(146):1--76, 2020.

\bibitem{Martens2015KFAC}
James Martens and Roger Grosse.
\newblock {Optimizing neural networks with Kronecker-factored approximate curvature}.
\newblock In {\em Proceedings of the 32nd International Conference on International Conference on Machine Learning - Volume 37}, ICML'15, page 2408–2417. JMLR.org, 2015.

\bibitem{Achieving2023Munk}
Andreas Munk, Alexander Mead, and Frank Wood.
\newblock Achieving high accuracy with pinns via energy natural gradient descent.
\newblock In {\em Proceedings of the 40th International Conference on Machine Learning}, ICML'23, 2023.

\bibitem{neklyudov2023wasserstein}
Kirill Neklyudov, Jannes Nys, Luca Thiede, Juan Felipe~Carrasquilla Alvarez, qiang liu, Max Welling, and Alireza Makhzani.
\newblock {Wasserstein Quantum Monte Carlo: A Novel Approach for Solving the Quantum Many-Body Schr\"odinger Equation}.
\newblock In {\em Thirty-seventh Conference on Neural Information Processing Systems}, 2023.

\bibitem{nesterov1983method}
Yurii Nesterov.
\newblock A method of solving a convex programming problem with convergence rate ${O}(1/k^2)$.
\newblock {\em Soviet Mathematics Doklady}, 27(2):372--376, 1983.

\bibitem{nurbekyan2023efficient}
Levon Nurbekyan, Wanzhou Lei, and Yunan Yang.
\newblock Efficient natural gradient descent methods for large-scale {PDE}-based optimization problems.
\newblock {\em SIAM Journal on Scientific Computing}, 45(4):A1621--A1655, 2023.

\bibitem{pfau2020ab}
David Pfau, James~S Spencer, Alexander~GDG Matthews, and W~Matthew~C Foulkes.
\newblock {Ab initio solution of the many-electron Schr{\"o}dinger equation with deep neural networks}.
\newblock {\em Physical review research}, 2(3):033429, 2020.

\bibitem{Raissi2019Physics}
M.~Raissi, P.~Perdikaris, and G.E. Karniadakis.
\newblock {Physics-informed neural networks: A deep learning framework for solving forward and inverse problems involving nonlinear partial differential equations}.
\newblock {\em Journal of Computational Physics}, 378:686--707, 2019.

\bibitem{Raissi2018deep}
Maziar Raissi.
\newblock Deep hidden physics models: deep learning of nonlinear partial differential equations.
\newblock {\em Journal of Machine Learning Research}, 19(1):932–955, January 2018.

\bibitem{pmlr-v235-rathore24a}
Pratik Rathore, Weimu Lei, Zachary Frangella, Lu~Lu, and Madeleine Udell.
\newblock Challenges in training {PINN}s: A loss landscape perspective.
\newblock In {\em Proceedings of the 41st International Conference on Machine Learning}, volume 235 of {\em Proceedings of Machine Learning Research}, pages 42159--42191. PMLR, 21--27 Jul 2024.

\bibitem{Shibin2021understanding}
Bin Shi, Simon~S. Du, Michael~I. Jordan, and Weijie~J. Su.
\newblock Understanding the acceleration phenomenon via high-resolution differential equations.
\newblock {\em Mathematical Programming}, 195(1-2), 2021.

\bibitem{su2016ode}
Weijie Su, Stephen Boyd, and Emmanuel~J. Cand{\`e}s.
\newblock A {D}ifferential {E}quation for {M}odeling {N}esterov's {A}ccelerated {G}radient {M}ethod: {T}heory and {I}nsights.
\newblock {\em Journal of Machine Learning Research}, 17(153):1--43, 2016.

\bibitem{villani2009optimal}
Cédric Villani.
\newblock {\em Optimal Transport: Old and New}.
\newblock Springer, Berlin Heidelberg, 2009.

\bibitem{understanding2021wang}
Sifan Wang, Yujun Teng, and Paris Perdikaris.
\newblock Understanding and {M}itigating {G}radient {F}low {P}athologies in {P}hysics-{I}nformed {N}eural {N}etworks.
\newblock {\em SIAM Journal on Scientific Computing}, 43(5):A3055--A3081, 2021.

\bibitem{wang2022accelerated}
Yifei Wang and Wuchen Li.
\newblock Accelerated {I}nformation {G}radient {F}low.
\newblock {\em Journal of Scientific Computing}, 90(1), jan 2022.

\bibitem{wu2023parameterizedwassersteinhamiltonianflow}
Hao Wu, Shu Liu, Xiaojing Ye, and Haomin Zhou.
\newblock {Parameterized Wasserstein Hamiltonian Flow}.
\newblock {\em SIAM Journal on Numerical Analysis}, 63(1):360--395, 2025.

\bibitem{Wu2024Convergence}
Jiayuan Wu, Jiang Hu, Hongchao Zhang, and Zaiwen Wen.
\newblock Convergence analysis of an adaptively regularized natural gradient method.
\newblock {\em IEEE Transactions on Signal Processing}, 72:2527--2542, 2024.

\bibitem{yang2023NGP}
Minghan Yang, Dong Xu, Qiwen Cui, Zaiwen Wen, and Pengxiang Xu.
\newblock An {E}fficient {F}isher {M}atrix {A}pproximation {M}ethod for {L}arge-{S}cale {N}eural {N}etwork {O}ptimization.
\newblock {\em IEEE Transactions on Pattern Analysis and Machine Intelligence}, 45(5):5391--5403, 2023.

\bibitem{Yang2022Sketch}
Minghan Yang, Dong Xu, Zaiwen Wen, Mengyun Chen, and Pengxiang Xu.
\newblock Sketch-based empirical natural gradient methods for deep learning.
\newblock {\em J. Sci. Comput.}, 92(3), September 2022.

\end{thebibliography}
\end{document}